\def\qed{\hfill \vrule height 7pt width 7pt depth 0pt\medskip}
\def\beq{\begin{equation}}
\def\eeq{\end{equation}}
\newtheorem{theorem}{Theorem}
\newtheorem{proposition}[theorem]{Proposition}
\newtheorem{lemma}[theorem]{Lemma}
\newtheorem{corollary}[theorem]{Corollary}
\newtheorem{example}{Example}
\theoremstyle{remark}
\newcommand{\ba}{\begin{array}}
\newcommand{\ea}{\end{array}}
\newcommand{\be}{\begin{equation}}
\newcommand{\ee}{\end{equation}}
\newcommand{\mc}{\mathcal}
\newcommand{\1}{\mathbbm{1}}
\newcommand{\N}{\mathbb{N}}
\renewcommand{\P}{\mathbb{P}}
\DeclareMathOperator*{\argmax}{argmax}
\def\1{\mathds{1}}
\def\N{\mathbb{N}}
\def\P{\mathbb{P}}
\providecommand{\keywords}[1]{\textbf{Key words.} #1}
\providecommand{\ams}[1]{\textbf{AMS subject classification.} #1}
\newcommand{\TheTitle}{A game theoretic approach to a network cloud storage problem}
\title{\TheTitle}
\author{
  Fabio Fagnani\thanks{Department of Mathematical Sciences, Politecnico di Torino, Italy
    (fabio.fagnani@polito.it).}
  \and
  Barbara Franci\thanks{Department of Mathematical Sciences, Politecnico di Torino, Italy
   (barbara.franci@polito.it).}
}
\begin{document}
\maketitle

\begin{abstract}
The use of game theory in the design and control of large scale networked systems is becoming increasingly more important. In this paper, we follow this approach to efficiently solve a network allocation problem  
motivated by peer-to-peer cloud storage models as alternatives to classical centralized cloud storage services. To this aim, we propose an allocation algorithm that allows the units to use their neighbors to store a back up of their data. We prove convergence, characterize the final allocation, and corroborate our analysis with extensive numerical simulation that shows the good performance of the algorithm in terms of scalability, complexity and structure of the solution.
\end{abstract}

\keywords{Network Allocation Problem, Cloud storage, Evolutionary Game Theory, Noisy Best Response, Systems on Graphs}

\ams{91A22, 91A43, 90C27} 

\section{Introduction}

In the last decade, game theory has emerged as a new fundamental paradigma in the design and control of large scale networked engineering systems \cite{GTW, GTL, GTDS}. The basic idea is that of modeling system units as rational agents whose behavior consists in a selfish search for their maximum utility. The goal is to design both the agents utility functions and an adaptation rule (evolutionary game dynamics) in such a way that the resulting global behavior is desirable. One of the advantages of this approach with respect to classical global optimization techniques is that it naturally leads to scalable decentralized solutions that can easily incorporate constraints on the computational power of the units as well as the information flow determined by the network architecture.  Consensus dynamics, which is a fundamental ingredient of networked control, has been recently recognized \cite{CCPG} to be an instance of an evolutionary dynamics of a networked coordination game.

Two are the challenging issues when (evolutionary) game theory is used as a mechanism design: first, designing admissible agent utility functions such that the resulting game possesses desirable Nash equilibria; second, designing adaptation rules that guarantee the solution's convergence to such Nash equilibria.  For an important class of games known as potential games, a popular adaptation rule is the so-called noisy best-response where units choose their state by following a Gibbs-Boltzmann distribution having its peak on the maxima of the utility function \cite{GTL}. This rule can be proven to yield convergence, asymptotically in time and in the limit of the noise approaching zero, to maxima of the potential that form a subclass of the Nash equilibria.  In this context, the goal is to design utility functions in such a way that the potential (in particular, its maxima) have the desired global properties. The noisy best-response turns out to be a random decentralized scheme for the maximization of the potential.

In this paper, we consider an allocation problem where a population of units is connected through a network and each of them possesses a number of items that need to be allocated among the neighboring units. The interpretation is that of a decentralized peer-to-peer cloud storage model where computers (or other devices with storage capabilities) in a network are made to collaborate with each other so that each of them can store a back up of their data in the available space of the neighbors. The model was first presented in \cite{GTA} where the reader can find more detailed motivations. 

We cast the allocation problem into a game theoretic framework by assuming that each unit has a utility function that gives a value to their neighbors on the basis of their reliability (possibly reflecting responsiveness or security issues), their current congestion (resources have bounded storage capabilities), and the amount of data the unit already stores in them. This reflects a number of features that we want the storage final configuration to possess. In the storage process, a unit should prefer to use those neighbors that are more reliable and, among equally reliable ones, those that are less congested. Finally, storage should not be split into too many parts to avoid excessive communication burden. Following evolutionary game theory \cite{GTL}, we propose an algorithm based on a noisy best response action: each time a unit activates, it decides which neighbor to use for storage on the basis of a Gibbs probability distribution.

Allocation games have been extensively studied \cite{ACG, CRAG}. One of the key features of our model is the presence of hard constraints as a consequence of the bounded storage capabilities of the units. This property is non-classical in game theory and has remarkable consequences on the structure of Nash equilibria and the behavior of the algorithm. The proposed game is shown to be a potential game. However, convergence of the best response dynamics is here a subtle issue that does not follow from classical results. This will follow instead from a careful proof of the ergodicity of the underlying Markov process that constitutes the technical core of this paper. The conclusive result is that: (i) all units complete their allocation in finite time with probability one; and (ii) the allocation configuration converges, when the noise parameter approaches $0$, to a maximum of the potential. These results were announced in \cite{GTA}, but not proven. Moreover, in this paper, we deal with a more general situation: units can be in off mode and hence not responsive to storage requests from neighbors. 

In the remaining part of this section, we formally define the storage allocation problem and recall some basic facts proven in \cite{GTA} (in particular, a necessary and sufficient condition for the allocation problem to be solvable). Section \ref{sec: algorithm} is devoted to cast the problem to a potential game theoretic framework \cite{ROS} and to propose a distributed algorithm that is an instance of a noisy best response dynamics. The main technical part of the paper is Section \ref{sec: analysis} where the fundamental results Theorem \ref{theo main 1} and Corollary \ref{cor main 2} are stated and proven. Theorem \ref{theo main 1} ensures that the algorithm  reaches a complete allocation with probability one, if a complete allocation is indeed possible. Corollary \ref{cor main 2} studies the asymptotic behavior of the algorithm and explicitly exhibits the invariant probability distribution. Consequence of Corollary \ref{cor main 2} is that in the double limit when time goes to infinity and the noise parameter goes to $0$, the algorithm converges to a Nash equilibrium that is, in particular, a global maximum of the potential function. This guarantees that the solution will indeed be close to the global welfare of the community. Finally, Section \ref{sec simulations} is devoted to the presentation of an extensive set of simulations that corroborate the theoretical results, prove the good scalability properties of the algorithm in terms of speed and complexity, and illustrate the effect of the parameters of the utility functions in the solution reached by the algorithm. A conclusion section ends the paper.

\subsection{The model}

Consider a set $\mc X$ of units that play the double role of users who have to allocate externally a back up of their data, as well resources where data from other units can be allocated. Generically, an element of $\mc X$ will be called a unit, while the terms user and resource will be used when the unit is considered in the two possible roles of, respectively, a source or a recipient of data. We assume units to be connected through a directed graph $\mc G=(\mc X,\mc E)$ where a link $(x,y)\in\mc E$ means that unit $x$ is allowed to store data in unit $y$. We denote by $$N_x:=\{y\in\mc X\,|\, (x,y)\in\mc E\},\quad N^-_y:=\{x\in\mc X\,|\, (x,y)\in\mc E\}$$
respectively, the out- and the in-neighborhood of a node. Note the important different interpretation in our context: $N_x$ represents the set of resources available to unit $x$ while $N^-_y$ is the set of units having access to resource $y$. If $D\subseteq \mc X$, we put $N(D)=\cup_{x\in D}N_x$ and $N^-(D)=\cup_{x\in D}N_x^-$.

We imagine the data possessed by the units to be quantized atoms of the same size. Each unit $x$ is characterized by two non negative integers:
\begin{itemize}
\item $\alpha_x$ is the number of data atoms that unit $x$ needs to back up into his neighbors, 
\item $\beta_x$ is the number of data atoms that unit $x$ can accept and store from his neighbors.
\end{itemize}
The numbers $\{\alpha_x\}$ and $\{\beta_x\}$ will be assembled into two vectors denoted, respectively, $\alpha$ and $\beta$. We also define 
$$\mc A_x=\{(x,a)\,|\, a\in\{1,\dots ,\alpha_x\}\},\quad \mc A=\bigcup_{x\in\mc X}\mc A_x$$
Given the triple $(\mc G, \alpha, \beta)$, we define an allocation 
as any map $Q:\mc A\to\mc X$  ($Q(x,a)=y$ means that $x$ has allocated the data atom $a$ into resource $y$) satisfying the properties expressed below.
\begin{enumerate}
\item[(C1)] {\bf Graph constraint} $Q(x,a)\in N_x$ for all $x\in\mc X$ and $a\in \{1,\dots ,\alpha_x\}$;
\item[(C2)] {\bf Storage limitation} For every $y\in\mc X$, 
it holds, $ |Q^{-1}(y)|\leq \beta_y$.
\end{enumerate}

We will say that the allocation problem is solvable if an allocation $Q$ exists. We denote by $\mc Q$ the set of allocations. We will also need to consider partial allocations, namely functions $Q : D\to\mc X$ where $D\subseteq \mc A$ and satisfying, where defined, conditions (C1) and (C2). We denote by $\mc Q_p$ the set of partial allocations.

The following result gives a necessary and sufficient condition for the existence of allocations. The proof, which follows from Hall's theorem, can be found in \cite{GTA}.

\begin{theorem}\label{theo allocation exists} Given $(\mc G, \alpha, \beta)$, there exists an allocation iff the following condition is satisfied:
\beq\label{cond allocation exists}
\sum\limits_{x\in D}\alpha_x\leq \sum\limits_{y\in N(D)}\beta_y\quad\forall D\subseteq \mc X
\eeq
\end{theorem}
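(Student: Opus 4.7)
The natural plan is to reduce the problem to Hall's marriage theorem on a suitably constructed bipartite graph. Specifically, I would construct a bipartite graph $H=(\mc A\cup\mc B, \mc E_H)$ where the left vertex set is $\mc A$ (the data atoms, as already defined) and the right vertex set is $\mc B=\bigcup_{y\in\mc X}\{(y,b)\,|\, b\in\{1,\dots,\beta_y\}\}$, i.e.\ each resource $y$ is replicated $\beta_y$ times so as to encode its storage capacity. The edges of $H$ are $\{\{(x,a),(y,b)\}\,|\, y\in N_x\}$. With this construction, an allocation $Q$ is exactly an $\mc A$-saturating matching of $H$: condition (C1) translates into the matching respecting the edges of $H$, and condition (C2) translates into the matching being injective on the $\beta_y$ copies of each $y$.

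The necessity direction of the theorem would be proved directly by a pigeonhole argument: if $Q$ is an allocation and $D\subseteq \mc X$, then every atom in $\bigcup_{x\in D}\mc A_x$ must be sent to some $y\in N(D)$, and each such $y$ receives at most $\beta_y$ atoms, so $\sum_{x\in D}\alpha_x\leq \sum_{y\in N(D)}\beta_y$.

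For sufficiency, I would invoke Hall's theorem on $H$, which says an $\mc A$-saturating matching exists iff $|N_H(D')|\geq |D'|$ for every $D'\subseteq\mc A$. The task then reduces to verifying this Hall condition assuming \eqref{cond allocation exists}. The key observation is that it suffices to test the Hall condition on subsets of the form $D'=\bigcup_{x\in D}\mc A_x$ for $D\subseteq\mc X$: indeed, for any $D'\subseteq\mc A$ one can set $D=\{x\in\mc X\,|\, \mc A_x\cap D'\neq\emptyset\}$ and observe that $|D'|\leq \sum_{x\in D}\alpha_x$, while $N_H(D')=N_H\bigl(\bigcup_{x\in D}\mc A_x\bigr)$, so enlarging $D'$ only makes the inequality harder. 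For a subset of this canonical form, by construction $|D'|=\sum_{x\in D}\alpha_x$ and $N_H(D')=\bigcup_{y\in N(D)}\{(y,b)\,|\, b\in\{1,\dots,\beta_y\}\}$, so $|N_H(D')|=\sum_{y\in N(D)}\beta_y$, and \eqref{cond allocation exists} gives exactly what Hall requires.

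The only mildly delicate step is the reduction to the canonical form $\bigcup_{x\in D}\mc A_x$, since a priori Hall's condition must be checked on all subsets of $\mc A$; the rest is bookkeeping. I would expect no further obstacle, since \eqref{cond allocation exists} and the Hall condition on $H$ are essentially the same inequality up to grouping atoms by user and slots by resource.
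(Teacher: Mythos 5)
Your proof is correct and takes exactly the route the paper intends: the paper omits the argument, saying only that it ``follows from Hall's theorem'' and deferring to \cite{GTA}, and your construction---splitting each resource $y$ into $\beta_y$ slots, identifying allocations with $\mc A$-saturating matchings, proving necessity by pigeonhole, and reducing Hall's condition to the canonical sets $\bigcup_{x\in D}\mc A_x$---is precisely that standard reduction, carried out without gaps.
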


In general, it is not necessary to check the validity of (\ref{cond allocation exists}) for every subset $D$. We say that $D\subseteq \mc X$ is maximal if for any $D'\supsetneq D$, it holds $N(D')\supsetneq N(D)$. We say that $D_1, D_2\subseteq \mc X$ are independent if $N(D_1)\cap N(D_2)=\emptyset$ and $D\subseteq \mc X$ is called irreducible if it can not be decomposed into the union of two non empty independent subsets. Clearly, it is sufficient to verify (\ref{cond allocation exists}) for the subclass of maximal irreducible subsets.

\begin{example} If $\mc G$ is complete, we have that $N(\{x\})=\mc X\setminus\{x\}$ while $N(D)=\mc X$ for all $D$ such that $|D|\geq 2$. Hence, the only maximal irreducible subsets are the singletons $\{x\}$ and the set $\mc X$. Condition (\ref{cond allocation exists}) in this case reduces to
\beq\label{cond allocation exists complete}
\alpha_x\leq \sum\limits_{y\neq x}\beta_y,\; \forall x\in\mc X\qquad 
\sum\limits_{x\in\mc X}\alpha_x\leq \sum\limits_{y\in \mc X}\beta_y
\eeq
\end{example}
In general, the class of maximal irreducible subsets can be large and grow more than linearly in the size of $\mc X$, as the following example shows.
\begin{example} If $\mc G=(\mc X, \mc E)$ is a line graph ($\mc X=\{1,2,\dots , n\}$ and $\mc E=\{(i, i+1), \,i=1, \dots , n-1\}$) it can be checked that the maximal irreducible subsets are those of the form $\{i, i+2, \dots , i+2s\}$. 
\end{example}

When the allocation problem is solvable, it will in general admit many possible solutions. In order to analyze the set of solutions it is convenient to introduce an aggregative function of allocations.
Given a partial allocation $Q$, consider the matrix $W(Q)\in\N^{\mc X\times \mc X}$ where $W(Q)_{xy}$ is the number of atomic data that $x$ has allocated in $y$ under $Q$, namely,
\begin{equation}\label{W}W(Q)_{xy}:=|Q^{-1}(y)\cap\mc A_x|\end{equation}
Clearly $W=W(Q)$ satisfies the following conditions
\begin{enumerate}
\item[(P1)]  $W_{xy}\geq 0$ for all $x,y$ and $W_{xy}=0$ whenever $(x,y)\not\in \mc E$.
\item[(P2)]  $W_x:=\sum\limits_{y\in\mc X}W_{xy}\leq\alpha_x$ for all $x\in\mc X$.
\item[(P3)] $W^y:=\sum\limits_{x\in\mc X}W_{xy}\leq \beta_y$ for all $y\in\mc X$.
\end{enumerate}
It is immediate to see that, conversely, if there exists $W$ satisfying these properties (such a $W$ is called a partial allocation state), then, from it, we can construct an allocation $Q$ such that $W=W(Q)$. Clearly, under this correspondence, we have that $Q\in \mc Q$ iff $W$ satisfies (P2) with equality for all $x\in\mc X$. In this case $W$ is called an allocation state.
The set of partial allocation states and the set of allocation states are denoted, respectively, with the symbols $\mc W_p$ and $\mc W$. It is clear that two allocations $Q^1$ and $Q^2$ such that $W(Q^1)=W(Q^2)$, only differ for a permutation of the data atoms of the various units and for many purpouses can be considered  as equivalent. All the quantity of interest for the game theoretic setting will be defined at the level of $W$.

Allocation states are in general not unique:

\begin{example}\label{ex: multiple solutions}
Suppose $\mc G$ is the complete graph and that  
$\alpha_x=\alpha$, $\beta_x=\beta$ for every  $x\in\mc X$, with $\alpha\leq\beta$.  Among the possible allocation states there are those where each unit uses only one resource, namely for any permutation $\sigma :\mc X\to\mc X$ without fixed points, we can consider
\beq\label{extremal Nash}W^\sigma_{xy}=\alpha\1_{\sigma(x)=y}\eeq
More generally, if $s$ is an integer dividing $\alpha$ and we consider any symmetric $s$-regular graph over $\mc X$ (without self-loops) with adjacency matrix $A$, we can consider the corresponding state allocation $W^A$ given by
\beq\label{regular Nash}W^A_{xy}=\frac{\alpha}{s}A_{xy}\eeq
It is immediate to check that this is indeed a state allocation and that, in particular, $(W^A)^y=\alpha$ for every $y$. 
\end{example}

In general, there are extra desired features that we may want solutions to possess:

\begin{itemize}
\item {\bf (Reliability)} resources will often have different reliability properties and, in the allocation, a preference to more reliable resources should be given;
\item {\bf (Congestion)} equally reliable resources should be equally used, avoiding congestion phenomena;
\item {\bf (Aggregation)} units should use as few resources as possible to allocate their data.
\end{itemize}
Notice how in the example above, all the allocations states considered yield equal congestion for all resources while they differ from the point of view of aggregation: the solutions $W^\sigma$ are of course the one maximizing this feature.

Instead of translating the above qualitative features into quantitative hard constraints on the allocation states, in this paper, we undertake a different road. We cast the problem into a game theoretic scenario defining, for each unit $x$ and for each resource $y$, a function $f_{xy}(W)$ that is the utility of $x$ in using $y$ under the current (possibly partial) allocation state $W$. The utility functions are defined following the so called 'mechanism design' approach: utility will tend to be larger for more reliable or less congest resources or for resources where the unit has already allocated a larger size of its data. This will force the Nash equilibria of the game to be allocation states possessing, at a certain degree, the above features (typically in a compromised way depending on the parameter tuning of the utility functions). Our algorithm builds upon this game theoretic set-up. It is based on evolutionary game theory and is fully distributed: units iteratively allocate and move their data among the neighbors on the basis of the space available and trying to maximize a utility function that incorporates the features considered above. The algorithm always leads to an allocation, when such an allocation exists, that is a Nash equilibrium of the game.  


\section{The game theoretic set-up and the algorithm}\label{sec: algorithm}
In this section we present the details of the game theoretic structure and of the allocation algorithm.

Utilities are defined as follows. Under a (possibly partial) allocation state $W$, the utility of a unit $x$ in using resource $y$ is given by
\beq\label{payoff units}
f_{xy}(W):=\lambda_y- k_cW_y/\beta_y+k_aW_{xy} 
\eeq
The term $\lambda_y$ encodes possible reliability differences among resources, the second term is a congestion term that takes into consideration the level of use of the resource, and, finally, the third term depends on both $x$ and $y$ and pushes a unit to allocate in those resources where it has already allocated. $k_c, k_a$ are two non-negative parameters to tune the effect of the congestion and of the aggregation terms, respectively. 

Given $W\in\mc W_p$ and $x\in\mc X$, put 
$$\mc X^x(W):=\{y\in N_x\;|\; W_y<\beta_y\}$$
the set of resources still available for $x$ under the current allocation state $W$. 
An allocation state $W\in\mc W$ (and also any $Q\in\mc Q$ such that $W(Q)=W$) is called a Nash equilibrium if, for every $x\in\mc X$, for every $y\in N_x$ such that $W_{xy}>0$, for every $y'\in \mc X^x(W)$, it holds
$$f_{xy}(W)\geq f_{xy'}(W')$$
where $W'=W-e_{xy}+e_{xy'}$.
Under a Nash equilibrium, a unit whose goal is to maximize its utility, has no advantage in moving their allocated data, under the standing assumption that only one data atom at a time can be moved. Notice that data atoms are to be interpreted as aggregations of data and the decision of their size is part of the design of the algorithm. Clearly different levels of granularity will give rise to different game models including different Nash equilibria. 

Existence of Nash equilibria follow from the following considerations. Define
\beq\label{potential discrete W} \Psi(W):=\sum\limits_{y\in\mc X}\sum\limits_{s=0}^{W_y}[\lambda_y- k_cs/\beta_y]+k_a\sum\limits_{x,y\in\mc X}\sum\limits_{s=0}^{W_{xy}}s\eeq
and notice that if $W, W'\in\mc W$ are such that $W'=W-e_{x\bar y}+e_{x\bar y'}$, then,
\beq\label{potential relation}\begin{array}{l}\Psi(W')-\Psi(W)\\


=(\lambda_{\bar y'}- k_cW'_{\bar y'}/\beta_{\bar y'}+k_aW'_{\bar x\bar y'})-(\lambda_{\bar y}- k_cW_{\bar y}/\beta_{\bar y}+k_aW_{\bar x\bar y})\\
=f_{x\bar y'}(W')-f_{x\bar y}(W)\end{array}\eeq
In other terms, under the allocation state $W$, when user $x$ moves a data atom from $\bar y$ to $\bar y'$, he experiences a variation in utility given by $\Psi(W')-\Psi(W)$. $\Psi$ is called a potential of the game. 
%
Maxima of $\Psi$ are clearly Nash equilibria while, in general, the converse is not true. Considering that $\Psi$ is defined on a finite set, a maximum, and thus a Nash equilibrium, always exists. The existence of a potential will be also fundamental in exploiting the properties of the allocation algorithm.

\medskip
\noindent
{\bf Remark (Relation with classical game theory):} 
The game theoretic structure we have just defined is apparently different from the classical one: a set of players, each player playing an option and incurring in a utility that depends on the option he plays as well from the options played by the rest of the population. What are the players, the options and the utilities on our framework? One way to recast our model into a classical game theoretic one is that of considering as set of players not the set of units $\mc V$, but the set of data atoms to be stored $\mc A$. Given $(x,a)\in\mc A$, its set of options is simply given by $N_x$, the resources where it can be stored.
A choice of an option for each player is equivalent to an allocation $Q$. The utility of a player $(x,a)$ under an allocation $Q$ is given by $f_{xy}(W(Q))$. The game so defined is clearly potential with potential function given by $\Psi$.

\medskip
A non-classical feature of our game is the presence of hard constraints ($W_y\leq\beta_y$) in the way options can be simultaneously played. The presence of such constraints have important consequences in the structure of Nash equilibria. 

\subsection{Examples}
We now present a number of simple examples where Nash equilibria can be explicitly exhibited. The simplest case is when $k_a=0$ since in this case the game becomes a pure congestion game.

\begin{example}\label{ex: homogeneous1}
Suppose $\mc G$ is the complete graph and that  
$\alpha_x=\alpha$, $ \beta_x=\beta$,  $\lambda_x=\lambda$ for every $x\in\mc X$
with $\alpha\leq\beta$. Suppose moreover that $k_a=0$.
It is immediate to check that any $W$ such that $W^y=\alpha$ for every resource $y\in\mc X$ is a Nash equilibrium.  We now check that these are all the possible ones. Indeed if $W$ is a Nash equilibrium such that $W^y>\alpha >W^{y'}$ for some $y, y'\in\mc X$, it follows that 
$$f_{xy}(W)=\frac{W^y}{\beta}<\frac{W^{y'}+1}{\beta}=f_{xy'}(W+e_{xy'}-e_{xy})$$
This yields $W_{xy}=0$ for every $x\neq y'$. Therefore, $W_{y'y}=W^y>\alpha$ that is a contradiction.
Among the Nash equilibria considered above, there are those introduced in Example \ref{ex: multiple solutions}, $W^{\sigma}$ and $W^A$.
Since the potential $\Psi$ depends on $W$ only through the values $\{W^y\}$, it is easy to check that these Nash points are all maxima for $\Psi$.
\end{example}

In the case when $\mc G$ is not complete, the existence of state allocations $W$ where all resources are equally used (e.g.$W^y=\alpha$ for all $y$)  is not longer guaranteed. From Theorem \ref{theo allocation exists}, a necessary and sufficient condition for this to happen is clearly $|D|\leq |N(D)|$ for every $D\subseteq \mc X$. Notice that when this condition is verified and we consider the bipartite graph $\tilde{\mc G}=(\mc X\times \mc X, \tilde{\mc E})$ where $(x,y)\in\tilde{\mc E}$ iff $(x,y)\in E$, Hall's theorem guarantees the existence of a matching in $\tilde {\mc G}$ complete on the first set, namely a permutation $\sigma:\mc X\to\mc X$ such that $(x,\sigma(x))\in\tilde{\mc E}$ for every $x\in\mc X$. Using $\sigma$, as in the complete case, we can construct the state allocation $W^{\sigma}$ where each unit uses just one resource. Note that the condition $|D|\leq |N(D)|$ is automatically verified in symmetric $d$-regular graphs. Indeed, if $\mc E_{D}$ is the set of edges having one of the nodes in $D$, we have that $d|D|=|\mc E_{D}|\geq d|N(D)|$. 

We now present an example where the condition $|D|\leq |N(D)|$ is violated.

\begin{example} Let $\mc X=\{0,1,2\}^2$ and $\mc G$ to be the $2$-grid over $\mc X$, Suppose $\alpha_x=\alpha$,  $\beta_x=\beta$, $\lambda_x=\lambda$ for every $x\in\mc X$
with $\alpha\leq\beta$. Suppose moreover that $k_a=0$. Notice that $|D|\leq |N(D)|$ is not always verified. Indeed, if $D=\{(i,j)\,|\; 2|(i+j)\}$ we have that $|D|=5$ while $|N(D)|=4$. This implies that allocations where all the resources are equally used can not exist. More precisely, it can not exist an allocation if $\beta <5\alpha /4$. For $\beta\geq 5\alpha /4$ an allocation exists:
$$W_{(ij)(h,k)}=\left\{\begin{array}{ll} \frac{\alpha}{2}\; &{\rm if} i,j\in\{0,2\}\\ [2pt]
\frac{\alpha}{4}\;&{\rm if}\, (i,j)=(1,1)\\ [2pt]
\frac{2\alpha}{5}\; &{\rm if}\, 2\not{|}i+j,\, h,k\in\{0,2\}\\ [2pt]
\frac{\alpha}{5}\; &{\rm if}\, 2\not{|}i+j,\, (h,k)=(1,1)
\end{array}\right.
$$
This yields:
$W^{(i,j)}=4\alpha/5$ if $(i,j)\in D$ while $W^{(i,j)}=5\alpha/4$ if $(i,j)\in \mc X\setminus D$. It is immediate to verify that the neighbors of each unit are resources equally congested: this implies that $W$ is a Nash equilibrium and, actually, for previous considerations, it is easy to show that is indeed a maximum of $\Psi$.
\end{example}

The case when $k_a>0$ is generally harder to analyze. We start with a simple remark. Suppose $y_1,y_2\in\mc X$ are two resources with the same affidability and storage capability $\beta$. Let $W$ be a Nash equilibrium. If a unit $x$ is using both resources under $W$ (e.g. $W_{xy_i}>0$ for i=1,2) and none of them is fully congested (e.g. $W^{y_i}<\beta$ for $i=1,2$), then the fact that $W$ is Nash implies the double inequality:
\beq\label{equilibrium}\frac{W^{y_1}-W^{y_2}}{\beta}+k_a-\frac{1}{\beta}\leq k_a(W_{xy_1}-W_{xy_2})\leq
\frac{W^{y_1}-W^{y_2}}{\beta}-k_a+\frac{1}{\beta}\eeq
This implies that $k_a\leq 1/\beta$. Therefore, when $k_a>1/\beta$, in a Nash equilibrium, a unit will never use more than one resource, unless all but one of them are fully congested. 

\begin{example}\label{ex: homogeneous2}
Suppose $\mc G$ is the complete graph and that  
$\alpha_x=\alpha$, $\beta_x=\beta$, $\lambda_x=\lambda$ for every $x\in\mc X$
with $\alpha\leq\beta$. It is easy to show that the state allocations $W$ defined in (\ref{extremal Nash}) are maxima of the potential also for any  $k_a>0$. In the case when $k_a\leq 1/\beta$, it follows from (\ref{equilibrium}) that also the state allocations $W^A$ proposed in (\ref{regular Nash}) are still Nash equilibria (no longer maxima though if $k_a>0$ and $s>1$). Instead, in the case $k_a>1/\beta$, it follows again from (\ref{equilibrium}) that $W^A$ is Nash only in the case $\beta=\alpha$, namely when resources are all fully congested. In the case $k_a>1/\beta$, moreover, new Nash equilibria show up where resources have different congestion level. As a specific example, for the case $|\mc X|=3$, the state allocation
$$W=\left(\begin{matrix}0 &0&\alpha\\ 0 &0&\alpha\\ \alpha &0&0\end{matrix}\right)$$
is a Nash equilibrium for $\beta\geq 2\alpha$ and $k_a\geq 1/\beta$.
\end{example}

Even when the graph is complete and $k_a=0$, the presence of resources with different affidability and the presence of hard storage constraints, may cause the presence of Nash equilibria where resources are not uniformly used. This is shown in the following example.

\begin{example}\label{ex: different lambda}
Suppose $\mc X=\{1,2,3\}$ and $\mc G$ is the complete graph (namely a $3$-cycle). Values of $\alpha$, $\beta$ are given by
\begin{center}
\begin{tabular}{c c }
	$\alpha_1=7$& $\beta_1=35$  \\
	$\alpha_2=35$&  $\beta_2=21$ \\
	$\alpha_3=14$ & $\beta_3=28$  \\
\end{tabular}
\end{center}
Suppose moreover that $$\lambda_1-1>\lambda_2=\lambda_3=0$$
so that unit $1$, as a resource, is always the best to use, even when it is fully congested.
Of course, in any Nash equilibrium $W$, resource $1$ will be saturated, therefore $W$ will have the following structure
$$W=\left(\begin{matrix}0 &a &7-a\\ b & 0&35-b\\ 35-b &b-21&0\end{matrix}\right)$$
with $a\in [0,7]\cap\N$ and $b\in [21,35]\cap\N$.
Now notice that under a state like $W$ above, units $2$ and $3$ are automatically at equilibrium, they have no convenience to move atoms. Let us analyze the utilities of unit $1$. Defined
$W'=W+e_{13}-e_{12}$ and $W''=W+e_{12}-e_{13}$, we have that
$$\begin{array}{ll}f_{12}(W)=\frac{21-a-b}{21},\;& f_{13}(W')=\frac{a+b-43}{28}\\[3pt]
f_{13}(W)=\frac{a+b-42}{28},\;& f_{13}(W'')=\frac{20-a-b}{21}
\end{array}
$$
Algebraic computations show that
$$\begin{array}{ll}f_{12}(W)\geq f_{13}(W')\, &\Leftrightarrow\, a+b\leq 30\\ 
f_{13}(W)\geq f_{12}(W'')\, &\Leftrightarrow\, a+b\geq 30
\end{array}
$$

We can thus conclude as follows: $W$ is a Nash equilibrium if and only if the parameters $a$ and $b$ are in the following three ranges:
$$\begin{array}{lll}
(I)\;\;&21\leq b<23 &a=7\\
(II)\;\; &23\leq b\leq 30 &a=30-b\\
(III)\;\; &30< b\leq  35 & a=0
\end{array}
$$
Notice that in case (II), we have that $a+b=30$: in this case resources $2$ and $3$ are equivalent in the sense that there is no convenience in moving data from one to the other and viceversa. Instead, in case (I) and (II), resources $2$ and $3$ give unbalanced utilities. Straightforward computation shows that, Nash equilibria of type II correspond to global maxima of the potential function $\psi$.


\end{example}

%

\subsection{The algorithm}

The allocation algorithm we are proposing is fully distributed and asynchronous and is only based on communications between units taking place along the links of the graph $\mc G=(\mc X, \mc E)$. 

At every instant of time, a unit is either in functional state on or off: we interpret off units, as units temporarily disconnected from the network, not available for communication and any action including storage and data retrieval. A unit, which is currently in state on, can activate and either allocate or move an atomic piece or its data among the available resources (e.g. those neighbors that still have place available and are on at that time). Choice of the available resource where to allocate will be following a classical stochastic relaxation of the utility maximization based on a Gibbs probability distribution: this is a classical choice in evolutionary game theory and will be akin to a fairly complete theoretical analysis.

We now describe all the mathematical details of the algorithm. We recall that a partial allocation is a function $Q:D\subseteq \mc A\to\mc X$ and that $W=W(Q)$ denotes the corresponding partial allocation state. The functional state of the network at a certain time will instead be denoted by $\xi\in\{0,1\}^\mc X$: $\xi_x=1$ means that the unit $x$ is on. As already said, the basic ingredient of the algorithm is the Gibbs distribution that we now formally introduce. Given a functional state $\xi\in\{0,1\}^{\mc X}$, a partial allocation state $W\in\mc W_p$ and a unit $x\in\mc X$, we define
$$\mc X^x(\xi, W):=\{y\in N_x\,|\, W_y<\beta_y,\, \xi_y=1\}$$
that is the set of neighbors of $x$ that are on and have still space available.
We then denote with $p^x(\xi, W)$ the Gibbs probability distribution over $\mc X^x(\xi, W)$ given by
$$p^x_y(\xi, W)=\frac{e^{\gamma f_{xy}(W+e_{xy})}}{Z^x_{\gamma}(\xi, W)}\quad y\in \mc X^x(\xi, W)$$
where $\gamma>0$ is a parameter and where 
$$Z^x_{\gamma}(\xi, W)=\sum_{y\in\mc X^x(\xi, W)}e^{\gamma f_{xy}(W+e_{xy})}$$ is the normalizing factor.

The times when units modify their functional state (off to on or on to off) and the times when they activate to allocate/distribute are modeled as a family of independent Poisson clocks whose rates will be denoted (for unit $x$), respectively, $\nu_x^{on}$, $\nu_x^{off}$, and $\nu_x^{act}$. The functional state of the network as a function of time $\xi(t)$ is thus a continuous time Markov process whose components are independent Bernoulli processes. 

Mathematically, the algorithm can be described as a continuous-time process $Q(t):D(t)\to \mc X$ taking values on the set of partial allocations $\mc Q_p$. We let $W(t)=W(Q(t))$ denote the corresponding process of partial allocation states. 

The details of the algorithm are described below. 

\begin{enumerate}
\item Assume unit $\bar x$ activates at time $t$. With probabilities 
$$P^{\rm all}_{\bar x}(W(t)),\quad  P^{\rm dis}_{\bar x}(W(t))=1-P^{\rm all}_{\bar x}(W(t))$$ the resource $\bar x$ will make, respectively, an allocation or a distribution move as explained below. Of course we are assuming that 
$$\begin{array}{l}
P^{\rm all}_{\bar x}(W(t))=0 \; {\rm if}\; W(t)^{\bar x}=\alpha_{\bar x}\\[2pt]
P^{\rm dis}_{\bar x}(W(t))=0 \; {\rm if}\; W(t)^{\bar x}=0\end{array}$$

\item (ALLOCATION MOVE)
\begin{itemize}
\item Choose $(\bar x,\bar a)$ uniformly at random in $(D(t))^c\cap\mc A_{\bar x}$
\item  Choose $\bar y'$ according to the Gibbs probability $p^{\bar x}_{\bar y'}(\xi(t),W(t))$
\item Put $D({t^+})=D(t)\cup\{(\bar x,\bar a)\},$
$$\quad Q({t^+})(x,a)=\left\{\begin{array}{ll} \bar y'\;\; &{\rm if}\,(x,a)=(\bar x,\bar a)\\
Q(t)(x,a)\;\; &{\rm if}\, (x,a)\in D(t)\end{array}\right.$$
\end{itemize}
\item (DISTRIBUTION MOVE)
\begin{itemize}
\item Choose $(\bar x,\bar a)$ uniformly at random in $D(t)\cap\mc A_{\bar x}$ and put $\bar y=Q(t)(\bar x,\bar a)$
\item Choose $\bar y'$ according to the Gibbs probability $p^{\bar x}_{\bar y'}(\xi(t), W(t)-e_{\bar x\bar y})$
\item Put $D({t^+})=D(t),$
$$\quad  Q({t^+})(x,a)=\left\{\begin{array}{ll} \bar y'\;\; &{\rm if}\,(x,a)=(\bar x,\bar a)\\
Q(t)(x,a)\;\; &{\rm if}\, (x,a)\in D(t)\setminus \{(\bar x,\bar a)\}\end{array}\right.$$
\end{itemize}
\end{enumerate}

\section{Analysis of the algorithm}\label{sec: analysis}
In this section we analyze the behavior of the algorithm introduced above. We will essentially show two results. First, we prove that if the set of allocation $\mc Q$ is not empty (i.e. condition (\ref{cond allocation exists}) is satisfied), the algorithm above will reach such an allocation in bounded time with probability $1$ (e.g $D(t)=\mc A$ for $t$ sufficienty large). Second, we will show that, under a slightly stronger assumption than (\ref{cond allocation exists}), in the double limit $t\to +\infty$ and then $\gamma\to +\infty$, the process $Q(t)$ induced by the algorithm will always converge, in law, to a Nash equilibrium that is a global maximum of the potential function $\Psi$. 

In order to prove such results, it will be necessary to go through a number of intermediate technical steps. First of all, it will be convenient to work directly with the process $W(t)=W(Q(t))$. By the fact that the  jump probabilities of the original process $Q(t)$ are only function of $W(t)$ and of $\xi(t)$, it immediately follows that the augmented process $(\xi(t), W(t))$ is Markovian and its only non zero transition rates are described below:
\beq\label{transition1}
\Lambda_{(\xi, W), (\xi', W)}=
\left\{
\begin{array}{ll}
\nu_{\bar x}^{on}\quad &{\rm if}\, \xi_{\bar x}=0,\,\xi'_{\bar x}=1,\, \xi_x=\xi'_x\,\forall x\neq \bar x\\[2pt]
\nu_{\bar x}^{off}\quad &{\rm if}\, \xi_{\bar x}=1,\,\xi'_{\bar x}=0,\, \xi_x=\xi'_x\,\forall x\neq \bar x\end{array}
\right.
\eeq

\beq\label{transition2}
\Lambda_{(\xi, W), (\xi, W')}=
\left\{
\begin{array}{ll}
\nu_{\bar x}^{act}P^{\rm all}_{\bar x}(W)p_{\bar y'}^{\bar x}(\xi, W) \quad &\\
\hspace{3cm} {\rm if}\, \xi_{\bar x}=\xi_{\bar y'}=1,\, W'=W+e_{\bar x\bar y'}\\[2pt]
\nu_{\bar x}^{act}P^{\rm dis}_{\bar x}(W)\frac{W_{\bar x\bar y}}{\alpha_{\bar x}}p_{\bar y'}^{\bar x}(\xi, W-e_{\bar x\bar y})  \quad &\\
\hspace{3cm} {\rm if}\, \xi_{\bar x}=\xi_{\bar y}=\xi_{\bar y'}=1,\, W'=W+e_{\bar x\bar y'}-e_{\bar x\bar y}\end{array}
\right.
\eeq

We now introduce a graph on $\mc W_p$ that will be denoted by $\mc L_p$: an edge $(W,W')$ is present in $\mc L_p$ if and only if $\Lambda_{(\1,W),(\1, W')}>0$.

Our strategy will be to show that from any element $W\in\mc W_p$ there is a path in $\mc L_p$ to some element $W'\in \mc W$. Considering that there is a non-zero probability that the process $(\xi(t), W(t))$  starting from $(\xi, W)$ will reach $(\1, W)$ in bounded time (because of the presence of transitions (\ref{transition1})), standard probabilistic arguments will then allow to conclude that allocation (e.g. $D(t)=\mc A$) will be achieved in bounded time with probability $1$. 

Given $W\in \mc W_p$ we define the following subsets of units 
$$\mc X^f(W):=\{x\in\mc X\;|\; W^x=\alpha_x\},$$
$$\mc X^{sat}(W):=\{x\in\mc X\setminus\mc X^f(W)\;|\; \not\exists y\in N_x\;\hbox{\rm s.t}\; W(Q)_y< \beta_y\}$$
Units in $\mc X^f(W)$ are called fully allocated: these units have completed the allocation of their data under the state $W$. Units in $\mc X^{sat}(W)$ are called saturated: they have not yet completed their allocation, however, under the current state $W$, they can not make any action, neither allocate, nor distribute. 
Finally, define
$$\mc W_p^{sat}:=\{W\in\mc W_p\setminus\mc W\;|\; \mc X=\mc X^f(W)\cup \mc X^{sat}(W)\}$$
It is clear that from any $W\in\mc W_p\setminus \mc W_p^{sat}$, some allocation move can be performed. Instead, if we are in a state $W\in \mc W_p^{sat}$, only possibly fully allocated units can make a distribution move. Notice that, because of condition (\ref{cond allocation exists}), for sure there exist resources $y$ such that $W_y<\beta_y$ and these resources are indeed exclusively connected to fully allocated units. The key point is to show that in a finite number of distribution moves it is always possible to move some data atoms from resources connected to saturated units to resources with available space: this will then make possible a further allocation move.

For any fixed $W\in W_p$, we can consider the following graph structure on $\mc X$ thought as set of resources: $\mc H_W=(\mc X,\mc E_W)$. Given $y_1,y_2\in\mc X$, there is an edge from $y_1$ to $y_2$  if and only if there exists $x\in\mc X$ for which
$$W_{xy_1}>0,\quad (x,y_2)\in\mc E
$$
The edge from $y_1$ to $y_2$ will be indicated with the symbol $y_1\to_x y_2$ (to also recall the unit $x$ involved). The presence of the edge means that the two resources $y_1$ and $y_2$ are in the neighborhood of a common unit $x$ that is using $y_1$ under $W$. This indicates that $x$ can in principle move some of its data currently stored in $y_1$ into resource $y_2$ if this last one is available. We have the following technical result

\begin{lemma}\label{lemma equilibrium 1} Suppose $(\mc G ,\alpha, \beta)$  satisfies (\ref{cond allocation exists}). Fix $W\in W_p$ and let $\bar y\in\mc X$  be such that there exists $\bar x\in N_{\bar y}$ with $W^{\bar x}<\alpha_{\bar x}$.
Then, there exists a sequence
\beq\label{sequence} \bar y=y_0,\,x_0,\,y_1,\dots ,y_{t-1},\,x_{t-1},\,y_t\eeq
satisfying the following conditions
\begin{enumerate}
\item[(Sa)] Both families of the $y_k$'s  and of the $x_k$'s are each made of distinct elements;
\item[(Sb)] $y_k\to_{x_k} y_{k+1}$ for every $k=0,\dots ,t-1$;
\item[(Sc)] ${W}_{y_k}=\beta_{y_k}$ for every $k=0,\dots ,t-1$, and ${W(Q)}_{y_t}<\beta_{y_t}$.
\end{enumerate}
\end{lemma}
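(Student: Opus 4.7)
The plan is to argue by contradiction, using condition~(\ref{cond allocation exists}): if no such sequence exists, I will build a subset of $\mc X$ violating~(\ref{cond allocation exists}).

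First, I reduce the problem to exhibiting any directed walk in $\mc H_W$ from $\bar y$ to a resource $y$ with $W_y<\beta_y$. A minimum-length such walk automatically satisfies (Sa), (Sb) and (Sc): if $y_i=y_j$ with $i<j$, the shortcut $y_0,\ldots,y_i,x_j,y_{j+1},\ldots,y_t$ is strictly shorter via the edge $y_i\to_{x_j}y_{j+1}$ (which is exactly the given edge $y_j\to_{x_j}y_{j+1}$); if $x_i=x_j$ with $i<j$, then $W_{x_iy_i}>0$ together with $(x_i,y_{j+1})\in\mc E$ produces the edge $y_i\to_{x_i}y_{j+1}$ and again a shorter walk; and a premature $W_{y_k}<\beta_{y_k}$ with $k<t$ would itself contradict minimality by truncation. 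So the question reduces to reachability in $\mc H_W$.

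Assume no such walk exists. Let $Y\subseteq\mc X$ be the set of resources reachable from $\bar y$ in $\mc H_W$; by assumption $W_y=\beta_y$ for every $y\in Y$, in particular $W_{\bar y}=\beta_{\bar y}\geq 1$. Set $X:=\{x\in\mc X : W_{xy}>0\text{ for some }y\in Y\}$. A direct check gives $N(X)=Y$: for $x\in X$ with $W_{xy}>0$ and $y\in Y$, every $y'\in N_x$ is the endpoint of the edge $y\to_x y'$, so $y'\in Y$ and $N(X)\subseteq Y$; conversely $\bar y\in N(X)$ since $W_{\bar y}>0$ forces some $x\in X$ adjacent to $\bar y$, and any other $y\in Y$ is reached by an edge $y'\to_x y$ with $x\in X$. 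Using $W_y=\beta_y$ on $Y$ and the fact that data in $Y$ originates only from $X$,
\[
\sum_{x\in X}W^x\;\geq\;\sum_{x\in X}\sum_{y\in Y}W_{xy}\;=\;\sum_{y\in Y}\beta_y\;=\;\sum_{y\in N(X)}\beta_y.
\]
Combined with~(\ref{cond allocation exists}) applied to $X$ and the bound $W^x\leq\alpha_x$, all inequalities collapse to equalities, so $W^x=\alpha_x$ on $X$, hence $X\subseteq\mc X^f(W)$, and since $W^{\bar x}<\alpha_{\bar x}$ we conclude $\bar x\notin X$.

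Now set $D:=X\cup\{\bar x\}$. Since $\bar y\in N_{\bar x}\cap N(X)$, one has $N(D)=N(X)\cup N_{\bar x}=Y\cup N_{\bar x}$; provided $N_{\bar x}\subseteq Y$,
\[
\sum_{x\in D}\alpha_x=\sum_{y\in Y}\beta_y+\alpha_{\bar x}>\sum_{y\in N(D)}\beta_y,
\]
contradicting~(\ref{cond allocation exists}). The main technical obstacle is thus the inclusion $N_{\bar x}\subseteq Y$: in the intended application $W\in\mc W_p^{sat}$, where $\bar x$ is saturated and every $y\in N_{\bar x}$ is fully congested, any $y^*\in N_{\bar x}\setminus Y$ can be absorbed by enlarging $Y$ with the resources reachable from $y^*$ in $\mc H_W$ and recomputing $X$. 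Each enlargement preserves the identity $\sum_XW^x=\sum_Y\beta_y$ and strictly adds newly-identified fully-allocated units to $X$, so the iteration terminates after finitely many steps with $N_{\bar x}\subseteq Y$, producing the desired Hall-violating set $D$.
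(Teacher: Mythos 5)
Up to the point where you deduce that all inequalities collapse to equalities, your argument is exactly the paper's: the reduction to reachability in $\mc H_W$ via a minimum-length walk, the reachable set $Y$ (the paper's $\mc Y$), the set $X$ of units storing in $Y$ (the paper's $\mc Z$), and the Hall-type counting. In fact you have put your finger on the step the paper glosses over: the paper ends the chain (\ref{estim}) with the strict inequality $\sum_{x\in\mc Z}W^x<\sum_{x\in\mc Z}\alpha_x$ ``from the existence of $\bar x$'', which tacitly assumes $\bar x\in\mc Z$, i.e.\ that $\bar x$ already stores data in some resource reachable from $\bar y$; as you observe, this need not hold, and without it one only gets equalities and, in particular, $\bar x\notin X$.

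The problem is your final step. It does not prove the lemma as stated, and it cannot, because the statement is false as written: take $\mc X=\{a,b,c,e\}$, $\mc E=\{(a,b),(a,e),(c,b)\}$, $\alpha_a=\alpha_c=1$, $\alpha_b=\alpha_e=0$, $\beta_b=\beta_e=1$, $\beta_a=\beta_c=0$, and $W$ with $W_{cb}=1$ and all other entries zero. Condition (\ref{cond allocation exists}) holds and $W\in\mc W_p$, the hypotheses hold with $\bar y=b$, $\bar x=a$, yet the only $\mc H_W$-edge leaving $b$ is the self-loop $b\to_c b$, so no sequence (\ref{sequence}) starting at $b$ can satisfy (Sc). Your patch runs into exactly this wall: you need $N_{\bar x}\subseteq Y$, you import an assumption not in the statement ($W\in\mc W_p^{sat}$, every $y\in N_{\bar x}$ congested), and the ``absorption'' step is not sound in the stated setting, because resources reachable from $y^*\in N_{\bar x}$ are in general not reachable from $\bar y$: if a free resource turns up during the enlargement you have built a walk from $y^*$, not from $\bar y$, and the identity $\sum_X W^x=\sum_Y\beta_y$ is preserved only if all newly added resources are full, which is precisely what you may not assume. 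What your $D=X\cup\{\bar x\}$ idea does prove cleanly -- with no iteration: let $Y$ be the set of resources reachable in $\mc H_W$ from \emph{some} $y\in N_{\bar x}$, let $X$ be the units storing in $Y$, note $N(D)\subseteq Y$, and use $\alpha_{\bar x}\geq 1$ -- is the weaker statement that \emph{some} $y_0\in N_{\bar x}$ admits a sequence satisfying (Sa)--(Sc). That corrected version is all that the proof of Theorem \ref{theo main 1} actually needs (there one minimizes over $\bar y\in N_{\bar x}$ anyway), and Lemma \ref{lemma equilibrium 2} is unaffected, since under (\ref{cond W ergodic}) and $W\in\mc W$ the chain closes without needing $\bar x\in\mc Z$. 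So: same approach as the paper on the sound part, a correctly diagnosed flaw, but your concluding argument as written is a gap -- it neither establishes the stated lemma (no argument can) nor states and proves the corrected claim.
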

\begin{proof}
Let $\mc Y\subseteq \mc X$ be the subset of nodes that can be reached from $\bar y$ in $\mc H_W$.
Preliminarily, we prove that there exists $y'\in\mc Y$ such that $W_{y'}<\beta_{y'}$.
Let
$$\mc Z:=\{x\in\mc X\;|\; \exists y\in\mc Y,\, W_{xy}>0\}$$
and notice that, by the way $\mc Y$ and $\mc Z$ have been defined,
\beq\label{condZ} x\in\mc Z,\; (x,y)\in \mc E\;\Rightarrow\; y\in\mc Y\eeq
Suppose now that, contrarily to the thesis, $W_y\geq \beta_y$ for all $y\in\mc Y$. Then, 
\begin{equation}\label{estim}\sum\limits_{x\in \mc Z}\alpha_x \leq \sum\limits_{y\in \mc Y}\beta_y=\sum\limits_{y\in \mc Y}W_y=\sum\limits_{y\in \mc Y}\sum\limits_{x\in\mc Z}W_{xy}=\sum\limits_{x\in\mc Z}W^x<\sum\limits_{x\in \mc Z}\alpha_x\end{equation}
where the first inequality follows from (\ref{condZ}) and (\ref{cond allocation exists}), the first equality from the contradiction hypothesis, the second equality from the definition of $\mc Z$, the third equality again from (\ref{condZ}) and, finally, last inequality from the existence of $\bar x$. This is clearly absurd and thus proves our claim.

Consider now a path of minimal length from $\bar y$ to $\mc Y$ in $\mc H_W$:
$$\bar y=y_0\to_{x_0}y_1\to_{x_1}\dots \to_{x_{t-2}} y_{t-1}\to_{x_{t-1}}y_t$$
and notice that the sequence $\bar y=y_0,\,x_0,\,y_1,\dots ,y_{t-1},\,x_{t-1},\,y_t$ will automatically satisfy properties (Sa) to (Sc).
\qed
\end{proof}

We are now ready to prove the first main result.

\begin{theorem} \label{theo main 1} Assume that 
\begin{enumerate}
\item $\nu^{on}_x>0$ and $\nu^{act}_x>0$ for every $x\in\mc X$, 
\item $P_{x}^{\rm all}(W, \bar x)>0$ if $W^ {x}<\alpha_{x}$, 
\item $(\mc G ,\alpha, \beta)$  satisfies (\ref{cond allocation exists}). 
\end{enumerate}
Then, 
$$\P(\exists t_0\,|\, W(t)\in\mc W\,\forall t\geq t_0)=1$$
\end{theorem}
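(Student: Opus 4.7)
The plan is to establish two things: (a) the set $\mc W$ is absorbing for the process $W(t)$, and (b) starting from any initial state the process reaches $\mc W$ in finite time almost surely. Part (a) is immediate: if $W(t)\in\mc W$, then $W(t)^x=\alpha_x$ for every $x$, so by the prescription $P^{\rm all}_{\bar x}=0$ when $W^{\bar x}=\alpha_{\bar x}$ no allocation move is possible, while distribution moves by construction preserve the column sums $W^x$, thus keeping the state in $\mc W$.

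The heart of the proof is part (b), which I would reduce to the following reachability claim: from any $W\in\mc W_p\setminus\mc W$, fixing $\xi=\mathbf{1}$, a finite sequence of transitions of positive rate leads to some $W'$ with $\sum_x (W')^x>\sum_x W^x$. If $W\notin\mc W_p^{sat}$, one finds a unit $\bar x$ with $W^{\bar x}<\alpha_{\bar x}$ and a neighbor $\bar y$ with $W_{\bar y}<\beta_{\bar y}$, and a single allocation move of $\bar x$ on $\bar y$ suffices: its rate is positive by assumptions 1--2 and the fact that the Gibbs weight $p^{\bar x}_{\bar y}$ is always strictly positive. If $W\in\mc W_p^{sat}$, pick $\bar x$ with $W^{\bar x}<\alpha_{\bar x}$ and any $\bar y\in N_{\bar x}$ (necessarily $W_{\bar y}=\beta_{\bar y}$ because $\bar x$ is saturated), and invoke Lemma \ref{lemma equilibrium 1} to obtain the chain $\bar y=y_0,x_0,y_1,\ldots,x_{t-1},y_t$. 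Then schedule the transitions in reverse order along the chain: $x_{t-1}$ distributes an atom from $y_{t-1}$ to $y_t$ (possible because $y_t$ has available capacity and $W_{x_{t-1}y_{t-1}}>0$), then $x_{t-2}$ distributes from $y_{t-2}$ to $y_{t-1}$ (which has just freed one unit of capacity), and so on down to $x_0$ distributing from $\bar y$ to $y_1$, finally followed by $\bar x$ allocating to $\bar y$. After step $k$ the resource $y_{t-k}$ has freed one unit of capacity which is precisely what the next step needs, while the distinctness of the $x_k$'s and $y_k$'s guaranteed by Lemma \ref{lemma equilibrium 1}(Sa) prevents earlier moves from invalidating the hypotheses of later ones; in particular the values $W_{x_j y_j}$ for $j$ not yet processed are untouched and hence remain strictly positive. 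Each step has positive rate by assumptions 1--2 and positivity of the Gibbs weights.

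To convert this discrete reachability into the continuous-time almost-sure statement, I would use assumption 1: since $\nu^{on}_x>0$ for every $x$, there is positive probability that $\xi$ reaches $\mathbf{1}$ and stays there long enough to execute the finite sequence just described. By the strong Markov property, from any state the process has a uniformly positive probability (bounded below by a constant depending only on $(\mc G,\alpha,\beta)$, the Poisson rates and $\gamma$) of reducing the deficit $\sum_x(\alpha_x-W^x)$ by at least one within any fixed time window. Since the deficit is a nonnegative integer bounded above by $\sum_x\alpha_x$, a standard Borel--Cantelli argument yields that the deficit reaches zero in finite time almost surely; combined with part (a) this proves the theorem. The main obstacle is the saturated case: one must both produce the chain via Lemma \ref{lemma equilibrium 1} and verify that its reverse-order execution is non-interfering, which is exactly where the distinctness property (Sa) plays the decisive role.
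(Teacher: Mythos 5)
Your overall strategy --- absorption of $\mc W$, a positive-probability finite path built from the chain of Lemma \ref{lemma equilibrium 1}, then a uniform lower bound on the success probability in a fixed time window plus Borel--Cantelli --- is the same as the paper's, which proves reachability by a double induction on the lexicographic pair $(m_W,t_W)$, making \emph{one} move at a time and recomputing the minimal chain after each move. The step of yours that does not hold as stated is the claim that, in the reverse-order execution of the whole chain, \emph{each} step has positive rate ``by assumptions 1--2''. Assumption 2 only forces $P^{\rm all}_x>0$ when $W^x<\alpha_x$; nothing in the hypotheses forces $P^{\rm dis}_x>0$ for a unit that is not fully allocated (the algorithm only imposes $P^{\rm dis}_x=1-P^{\rm all}_x$, with $P^{\rm all}_x=0$ when $W^x=\alpha_x$ and $P^{\rm dis}_x=0$ when $W^x=0$). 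In the saturated case $W\in\mc W_p^{sat}$ the last unit $x_{t-1}$ of the chain is necessarily fully allocated (it sees the non-full resource $y_t$, so it cannot be saturated), hence $P^{\rm dis}_{x_{t-1}}=1$ and your first step is fine; but the intermediate units $x_0,\dots,x_{t-2}$ may be saturated-but-not-fully-allocated (and one of them may even coincide with $\bar x$). When such an $x_k$ is processed, the previous step has just freed a slot in $y_{k+1}\in N_{x_k}$, so $x_k$ is no longer blocked, and an admissible choice of the move probabilities --- for instance exactly the one used in the paper's simulation section, $P^{\rm all}_x=1$ whenever $W^x<\alpha_x$ --- gives $P^{\rm dis}_{x_k}=0$ at that instant. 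Your prescribed distribution move then has rate zero, so the specific sequence of transitions you schedule need not have positive probability.

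The gap is local and easy to repair: run the chain in reverse order but stop at the first index $k$ (in processing order) for which $x_k$ is not fully allocated; at that moment $y_{k+1}$ has a free slot, $(x_k,y_{k+1})\in\mc E$, and assumption 2 gives $P^{\rm all}_{x_k}>0$, so $x_k$ can \emph{allocate} a new atom into $y_{k+1}$, which already strictly increases $\sum_x W^x$ --- exactly what your induction on the deficit needs. If no such $k$ exists, your original sequence goes through, and your non-interference argument based on the distinctness property (Sa) is correct. The paper avoids the issue by construction: it chooses a chain of minimal length $t_W$ over all admissible $(\bar x,\bar y)$ and performs only the last move $y_{t-1}\to_{x_{t-1}}y_t$; minimality forces $x_{t-1}$ to be fully allocated (otherwise $t_W=0$), so $P^{\rm dis}_{x_{t-1}}=1$ is automatic, and the pair $(m_W,t_W)$ strictly decreases at every step. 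With the truncation fix above, your argument becomes sound and essentially equivalent to the paper's.
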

\begin{proof}
In order to prove the claim, it will be sufficient to show that from any $W\in\mc W_p$ there is a path in $\mc L_p$to some element $W'\in \mc W$. We will prove it by a double induction process. To this aim we consider two indices associated to any $W\in\mc W_p\setminus \mc W$. The first one is defined by
$$m_W=\sum\limits_{x\in\mc X}(\alpha_x-W^x)\geq 1$$
To define the second, consider any  $\bar x\in \mc X\setminus \mc X^f(W)$. We can apply Lemma \ref{lemma equilibrium 1} to $W$ and any $\bar y\in N_{\bar x}$ and obtain that we can find a sequence of agents 
$\bar y=y_0,\,x_0,\,y_1,\dots ,\,y_{t-1},\,x_{t-1},\,y_t$
satisfying the properties (Sa), (Sb), and (Sc) above. Among all the possible choices of $\bar x\in\mc X$, $\bar y\in N_{\bar x}$ and of the corresponding sequence, assume we have chosen the one minimizing $t$ and denote such minimal $t$ by $t_W$. The induction process will be performed with respect to the lexicographic order induced by the pair $(m_W, t_W)$. 

In the case when $t_W=0$, it means we can find $\bar x\in\mc X$, $\bar y\in\ N_{\bar x}$ such that $W_{\bar y}<\beta_{\bar y}$. Therefore, under the allocation state $W$, the unit $\bar x$ can allocate a further data atom to $\bar y$. Considering that the activation of the unit $\bar x$ has positive probability because of assumptions 1. and 2., this shows that $W$ is connected to a $W'$ such that $m_{W'}<m_W$. In case $m_W=1$, this means that $W'\in\mc W$. 

Consider now any $W\in\mc W_p\setminus \mc W$ such that $t=t_W>1$. Let $\bar x\in\mc X$, $\bar y\in\ N_{\bar x}$  and the sequence $\bar y=y_0,\,x_0,\,y_1,\dots ,\,y_{t-1},\,x_{t-1},\,y_t$
satisfying the properties (Sa), (Sb), and (Sc) above. In the allocation state $W$, the unit $x_{t-1}$, if activated (and this again has positive probability to happen because of assumptions 1. and 2.), can thus move an atomic piece of data from $y_{t-1}$ to $y_t$. The new allocation state is $W'=W-e_{x_{t-1}y_{t-1}}+e_{x_{t-1}y_t}$. Since $W'_{y_{t-1}}<\beta_{y_{t-1}}$, for sure $t_{W'}<t_W$. The induction argument is thus complete. \qed

\end{proof}

We are now left with studying the Markov process $W(t)$ on $\mc W$.  Noisy Gibbs best response dynamics are known to yields reversible Markov processes. This is indeed the case also in our case once the process has reached the set of allocations $\mc W$. Precisely, the following result holds:

\begin{proposition}\label{prop time rev} $(\xi(t), W(t))$, restricted to $\{0,1\}^\mc X\times \mc W$, is a time reversible Markov process. More precisely, for every $(\xi, W),( \xi',W')\in\{0,1\}^{\mc X}\times \mc W$  it holds
\beq\label{time reversible}\rho_{(\xi, W)}\Lambda_{(\xi, W),(\xi', W')}=\rho_{(\xi', W')}\Lambda_{(\xi', W'),(\xi, W)}\eeq
where
\beq \rho_{(\xi, W)}=\left[\prod\limits_{x:\,\xi_x=1}\nu_x^{\rm on}\prod\limits_{x:\,\xi_x=0}\nu_x^{\rm off} \right]
\displaystyle\frac{\prod_x\alpha_x!}{\prod_{x,y}W_{xy}!}
e^{\gamma \Psi(W)}\eeq
\end{proposition}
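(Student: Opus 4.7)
The plan is to verify the detailed balance equations (\ref{time reversible}) directly for each type of allowed transition. Since $\rho$ is manifestly positive and finite on $\{0,1\}^{\mc X}\times \mc W$, reversibility with respect to $\rho$ will imply that $\rho$ (after normalization) is an invariant measure and that the process is time-reversible. The key observation restricting the transitions to consider is that on $\mc W$ every unit satisfies $W^x=\alpha_x$, so by assumption $P_{\bar x}^{\rm all}(W)=0$ and $P_{\bar x}^{\rm dis}(W)=1$ throughout: only functional-state flips and distribution moves have positive rates, and both map $\mc W$ into itself.

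First I would handle the $\xi$-transitions from (\ref{transition1}). If $\xi$ and $\xi'$ differ only at $\bar x$ with $\xi_{\bar x}=0,\,\xi'_{\bar x}=1$, the forward rate is $\nu_{\bar x}^{\rm on}$ and the reverse rate is $\nu_{\bar x}^{\rm off}$. In the formula for $\rho_{(\xi,W)}$ the only factor that changes when flipping $\xi_{\bar x}$ is the trade $\nu_{\bar x}^{\rm off}\leftrightarrow \nu_{\bar x}^{\rm on}$ in the bracketed product; the combinatorial factor and the $e^{\gamma\Psi(W)}$ are unaffected since $W$ is fixed. Detailed balance then reduces to the identity $\nu_{\bar x}^{\rm off}\cdot \nu_{\bar x}^{\rm on}=\nu_{\bar x}^{\rm on}\cdot \nu_{\bar x}^{\rm off}$, which is trivially true.

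Next I would verify the distribution moves. Fix $W,W'\in\mc W$ with $W'=W-e_{\bar x\bar y}+e_{\bar x\bar y'}$, and set $\tilde W:=W-e_{\bar x\bar y}=W'-e_{\bar x\bar y'}$. Both Gibbs distributions $p^{\bar x}_{\bar y'}(\xi,\tilde W)$ and $p^{\bar x}_{\bar y}(\xi,\tilde W)$ share the same normalizer $Z^{\bar x}_\gamma(\xi,\tilde W)$, so after cancelling $\nu_{\bar x}^{\rm act}/(\alpha_{\bar x} Z)$ from both sides of (\ref{time reversible}), what must be checked is
\[
\rho_{(\xi,W)}\,W_{\bar x\bar y}\,e^{\gamma f_{\bar x\bar y'}(W')}=\rho_{(\xi,W')}\,W'_{\bar x\bar y'}\,e^{\gamma f_{\bar x\bar y}(W)}.
\]
The ratio $\rho_{(\xi,W)}/\rho_{(\xi,W')}$ simplifies because the $\xi$- and $\alpha_x!$-factors cancel, while the multinomial entries change only in the two coordinates affected by the move, giving $\prod W'_{xy}!/\prod W_{xy}!=W'_{\bar x\bar y'}/W_{\bar x\bar y}$. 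Thus the identity above reduces to $e^{\gamma[\Psi(W)-\Psi(W')]}\,e^{\gamma f_{\bar x\bar y'}(W')}=e^{\gamma f_{\bar x\bar y}(W)}$, which is precisely the potential relation (\ref{potential relation}).

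The only real subtlety, and the step most worth dwelling on, is the bookkeeping between the two sides: making sure that $W_{\bar x\bar y}$ and $W'_{\bar x\bar y'}$ in the jump rates line up exactly with the factorial ratio produced by $\rho$, and that the two Gibbs terms are indeed evaluated at the same base allocation $\tilde W$ so that their normalizers cancel. Once these identifications are made cleanly, the detailed balance equations collapse onto the potential property (\ref{potential relation}), and the conclusion follows immediately.
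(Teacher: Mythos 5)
Your proof is correct and follows essentially the same route as the paper's: a case-by-case verification of detailed balance in which the functional-state flips reduce to the trivial $\nu^{\rm on}_{\bar x}\nu^{\rm off}_{\bar x}$ identity, and the distribution moves reduce, after cancelling the common Gibbs normalizer $Z^{\bar x}_\gamma(\xi,\tilde W)$ and matching the factorial ratio $W'_{\bar x\bar y'}/W_{\bar x\bar y}$ against the jump-rate factors, to the potential relation (\ref{potential relation}). Your explicit observation that $P^{\rm all}_{\bar x}=0$ and $P^{\rm dis}_{\bar x}=1$ on $\mc W$ simply makes precise why only these two transition types need be checked, which the paper leaves implicit in its enumeration of cases.
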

\begin{proof}
It follows from relations (\ref{transition1}) and (\ref{transition2}) that the only cases when $\Lambda_{(\xi, W),(\xi', W')}$ and $\Lambda_{(\xi', W'),(\xi, W)}$ are not both equal to zero are the following:
\begin{enumerate}
\item[(i)] $W'=W$, $\xi_{\bar x}=0,\,\xi'_{\bar x}=1,\, \xi_x=\xi'_x\,\forall x\neq \bar x$
\item[(ii)] $W'=W$, $\xi_{\bar x}=1,\,\xi'_{\bar x}=0,\, \xi_x=\xi'_x\,\forall x\neq \bar x$
\item[(iii)] $\xi'=\xi$, $W'=W+e_{\bar x\bar y'}-e_{\bar x\bar y}$.
\end{enumerate}
In case (i), we have that
$$\frac{ \rho_{(\xi, W)}}{ \rho_{(\xi', W')}}=\frac{\prod\limits_{x:\,\xi_x=1}\nu_x^{\rm on}\prod\limits_{x:\,\xi_x=0}\nu_x^{\rm off}}{\prod\limits_{x:\,\xi'_x=1}\nu_x^{\rm on}\prod\limits_{x:\,\xi'_x=0}\nu_x^{\rm off}}=\frac{\nu_{\bar x}^{\rm off}}{\nu_{\bar x}^{\rm on}}=
\frac{\Lambda_{(\xi', W'),(\xi, W)}}{\Lambda_{(\xi, W),(\xi', W')}}$$
Case (ii) can be analogously verified.

Consider now case (iii). We have that:
$$\frac{ \rho_{(\xi, W)}}{ \rho_{(\xi', W')}}=\frac{ W'_{\bar x\bar y'}  }{W_{\bar x\bar y}   }e^{\gamma (\Psi(W)-\Psi(W')}$$
On the other hand, using relation (\ref{transition2}) and the fact that $Z^{\bar x}_{\gamma}(\xi, W)=Z^{\bar x}_{\gamma}(\xi', W')$, we obtain
$$\frac{\Lambda_{(\xi', W'),(\xi, W)}}{\Lambda_{(\xi, W),(\xi', W')}}=\frac{W'_{\bar x\bar y'}}{W_{\bar x\bar y}}e^{\gamma ( f_{\bar x\bar y}(W)- f_{\bar x\bar y'}(W'))}$$
We can now conclude using relation (\ref{potential relation}). \qed
\end{proof}

We now show that under a slight stronger assumption than (\ref{cond allocation exists}), namely,
\beq\label{cond W ergodic}
\sum\limits_{x\in A}\alpha_x< \sum\limits_{y\in N(A)}\beta_y\quad\forall A\subseteq \mc X\,,
\eeq
the process $(\xi(t), W(t))$ restricted to $\{0,1\}^\mc X\times \mc W$ is ergodic. Denote by $\mc L$ the graph $\mc L_p$ restricted to the set $\mc W$. Notice that, as a consequence of time-reversibility, $\mc L$ is an undirected graph. Ergodicity is equivalent to proving that $\mc L$ is connected. We start with a lemma analogous to previous Lemma \ref{lemma equilibrium 1}.

\begin{lemma}\label{lemma equilibrium 2} Suppose $(\mc G ,\alpha, \beta)$  satisfies (\ref{cond W ergodic}) and let $W\in \mc W$. Then, for every $\bar y\in\mc X$, there exists a sequence (\ref{sequence})
satisfying the conditions (Sa), (Sb), and (Sc) as in Lemma \ref{lemma equilibrium 1}. 
\end{lemma}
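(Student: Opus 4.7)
The proof will be a direct adaptation of the proof of Lemma \ref{lemma equilibrium 1}. Fix $W \in \mc W$ and $\bar y \in \mc X$. I would define, exactly as before, $\mc Y \subseteq \mc X$ as the set of nodes reachable from $\bar y$ in $\mc H_W$ (including $\bar y$ itself), together with the companion set $\mc Z := \{x \in \mc X \,|\, \exists\, y \in \mc Y,\ W_{xy} > 0\}$. The key implication (\ref{condZ}), namely that $x \in \mc Z$ and $(x,y) \in \mc E$ imply $y \in \mc Y$, carries over verbatim since it only uses the definitions of $\mc Y$ and $\mc Z$.

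The core step is to establish the existence of some $y' \in \mc Y$ with $W_{y'} < \beta_{y'}$. Assuming for contradiction that $W_y = \beta_y$ for every $y \in \mc Y$, I would reproduce the chain (\ref{estim}), but now with the strengthened hypothesis (\ref{cond W ergodic}) applied to $A = \mc Z$, yielding
\begin{equation*}
\sum_{x \in \mc Z} \alpha_x \;<\; \sum_{y \in N(\mc Z)} \beta_y \;\leq\; \sum_{y \in \mc Y} \beta_y \;=\; \sum_{y \in \mc Y} W_y \;=\; \sum_{y \in \mc Y}\sum_{x \in \mc Z} W_{xy} \;=\; \sum_{x \in \mc Z} W^x \;=\; \sum_{x \in \mc Z} \alpha_x,
\end{equation*}
which is absurd. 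The conceptual point is that the roles of the two strict inequalities in the proof of Lemma \ref{lemma equilibrium 1} are swapped: there, the strictness was at the right end (provided by the under-allocated unit $\bar x$) while the leftmost inequality was non-strict; here, since $W \in \mc W$ gives $W^x = \alpha_x$ for every $x$, the right end is forced to be an equality, and the necessary slack must instead come from the left via (\ref{cond W ergodic}).

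Once such a $y' \in \mc Y$ with $W_{y'} < \beta_{y'}$ is available, I would take a shortest path from $\bar y$ to $y'$ in $\mc H_W$ and read off the sequence (\ref{sequence}). Minimality of the path makes the $y_k$'s pairwise distinct (a repetition would obviously allow a shortcut), and likewise the $x_k$'s: if $x_i = x_j$ with $i < j$, then $W_{x_i y_i} > 0$ together with $y_{j+1} \in N_{x_j} = N_{x_i}$ furnishes the shortcut edge $y_i \to_{x_i} y_{j+1}$, contradicting minimality. Thus (Sa) holds, while (Sb) and (Sc) are built into the construction. The main technical care needed is the bookkeeping in the displayed chain of (in)equalities and the observation that the only degenerate situation, $\mc Z = \emptyset$, forces $\mc Y = \{\bar y\}$ and $W_{\bar y} = 0$, which is already handled by the trivial sequence $t = 0$ under the implicit assumption $\beta_{\bar y} > 0$.
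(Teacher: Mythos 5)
Your proof is correct and follows essentially the same route as the paper, which simply reuses the argument of Lemma \ref{lemma equilibrium 1} with the observation that in (\ref{estim}) the leftmost inequality becomes strict via (\ref{cond W ergodic}) while the rightmost becomes an equality because $W\in\mc W$ gives $W^x=\alpha_x$ for all $x$. Your additional remarks (distinctness of the $x_k$'s from path minimality, and the degenerate case $\mc Z=\emptyset$) only make explicit details the paper leaves implicit.
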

\begin{proof} It is sufficient to follow the steps of to the proof of Lemma \ref{lemma equilibrium 1} noticing that in (\ref{estim}) the first equality is now a strict inequality, while the last strict inequality becomes an equality. \qed
\end{proof}

If $W,W'\in\mc W$ are connected through a path in $\mc L$, we write that $W\sim W'$. Introduce the following distance on $\mc W$: if $W^1, W^2\in\mc W$
$$\delta(W^1,W^2)=\sum\limits_{x,y}|W^1_{xy}-W^2_{xy}|$$
A pair $(W^1, W^2)\in\mc W$ is said to be minimal if 
$$\delta (W^1,W^2)\leq \delta (W^{1'}, W^{2'})\;\;\forall W^{1'}\sim W^1,\; \forall W^{2'}\sim W^2$$
Notice that $\mc L$ is connected if and only if for any minimal pair $(W^{1}, W^{2})$, it holds $W^{1}=W^{2}$. 

\begin{lemma}\label{lemma minimal1} Let $(W^1, W^2)$ be a minimal pair.
Suppose $y\in\mc X$ is such that $W^1_y<\beta_y$. Then, $W^1_{xy}=W^2_{xy}$ for all $x\in\mc X$.
\end{lemma}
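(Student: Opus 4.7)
The plan is to argue by contradiction: assume there is some $x \in \mc X$ with $W^1_{xy} \neq W^2_{xy}$, and exhibit a single swap move (in $\mc L$) from either $W^1$ or $W^2$ that strictly decreases $\delta$, contradicting the minimality of the pair $(W^1, W^2)$. The key fact I would use repeatedly is that both $W^1$ and $W^2$ are full allocations, so $\sum_{y'} W^i_{xy'} = \alpha_x$ for each $x$; hence any discrepancy in one column of $W^1 - W^2$ at row $x$ must be compensated by an opposite discrepancy in some other column $y' \neq y$ at the same row.

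I would split into two cases. In the first case, suppose there exists $x$ with $W^1_{xy} < W^2_{xy}$. Pick $y' \neq y$ with $W^1_{xy'} > W^2_{xy'}$, which exists by the row-sum identity, and consider $W^{1'} := W^1 - e_{xy'} + e_{xy}$. The move is admissible in $\mc L$: $W^1_{xy'} > 0$ allows the decrement; $(x,y) \in \mc E$ because $W^2_{xy} > W^1_{xy} \geq 0$ forces $W^2_{xy} > 0$; and the crucial column constraint $W^{1,y} + 1 \leq \beta_y$ holds precisely because of the hypothesis $W^1_y < \beta_y$. Since both $|W^1_{xy} - W^2_{xy}|$ and $|W^1_{xy'} - W^2_{xy'}|$ strictly decrease by one, $\delta(W^{1'}, W^2) = \delta(W^1, W^2) - 2$, contradicting minimality.

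In the complementary case, $W^1_{xy} \geq W^2_{xy}$ for every $x$, with strict inequality for some $x$. Summing over $x$ gives $W^2_y < W^1_y < \beta_y$, so now there is room in column $y$ of $W^2$. Picking $x$ with $W^1_{xy} > W^2_{xy}$ and, by the row-sum identity, some $y'$ with $W^2_{xy'} > W^1_{xy'}$, I would perform the symmetric swap $W^{2'} := W^2 - e_{xy'} + e_{xy}$; admissibility is verified exactly as before (now $W^2_{xy'} > 0$, $(x,y) \in \mc E$ since $W^1_{xy} > 0$, and $W^{2,y} + 1 \leq \beta_y$ since $W^2_y < \beta_y$), yielding $\delta(W^1, W^{2'}) = \delta(W^1, W^2) - 2$, again a contradiction.

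The only non-routine point is the second case: one might worry that the room in $W^1$'s column $y$ does not translate into room in $W^2$'s column $y$, but this is resolved by the observation that under the case hypothesis the $y$-column of $W^1 - W^2$ is non-negative entrywise, so $W^2_y \leq W^1_y$ automatically inherits the slackness. Once this is seen, the two cases are parallel and cover all possibilities, so the contradiction is complete and the lemma follows.
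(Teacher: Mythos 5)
Your proof is correct and follows essentially the same route as the paper: a contradiction via a single distribution move that strictly decreases $\delta$, first ruling out $W^1_{xy}<W^2_{xy}$, then observing that $W^2_y\leq W^1_y<\beta_y$ lets the same swap argument be applied with the roles of $W^1$ and $W^2$ exchanged. The paper compresses your second case into that role exchange, but the content is identical.
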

\begin{proof}
Suppose by contradiction that $W^1_{xy}<W^2_{xy}$ for some $x\in\mc X$. Then, necessarily, there exists $y'\neq y$ such that $W^1_{xy'}>W^2_{xy'}$. Consider then $W^{1'}=W^1-e^{xy'}+e^{xy}$. Clearly, $\delta (W^{1'},W^2)<\delta(W^1,W^2)$ and this contradicts the minimality assumption. Thus $W^1_{xy}\geq W^2_{xy}$ for all $x\in\mc X$. This yields $W^2_y<\beta_y$. Exchanging the role of $W^1$ and $W^2$ we obtain the thesis. \qed
\end{proof}

\begin{proposition}\label{prop connected} If condition (\ref{cond W ergodic}) holds true, the graph $\mc L$ is connected.
\end{proposition}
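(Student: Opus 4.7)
The plan is to show that $\mc L$ is connected by proving that every minimal pair $(W^1,W^2)$ satisfies $W^1=W^2$. I argue by contradiction: assume $(W^1,W^2)$ is a minimal pair with $W^1\neq W^2$, and I will exhibit $\hat W^1\sim W^1$ with $\delta(\hat W^1,W^2)<\delta(W^1,W^2)$, contradicting the minimality assumption.

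First I examine the structure of the difference matrix $D:=W^1-W^2$. Since both $W^1,W^2\in\mc W$, $D$ has zero row sums. By Lemma~\ref{lemma minimal1} applied to both $(W^1,W^2)$ and $(W^2,W^1)$, every column $y$ supporting a nonzero entry of $D$ must satisfy $W^1_y=W^2_y=\beta_y$, so $D$ also has zero column sums. Viewing $D$ as a signed edge-weighting on the bipartite graph with units on one side and resources on the other, $D\neq 0$ therefore decomposes into alternating cycles. I fix a simple one $C=(x_1,y_1,\dots,x_s,y_s)$ with $D_{x_iy_i}>0$ and $D_{x_{i+1}y_i}<0$ (indices mod $s$); the $x_i$ and the $y_i$ are pairwise distinct and every $y_i$ is fully saturated in $W^1$.

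Next, by Lemma~\ref{lemma equilibrium 2} applied to $W^1$ with $\bar y$ chosen among $y_1,\dots,y_s$ so as to minimise the chain length (and, after relabeling, taken to be $y_1$), I pick a ``detour'' $y_1=u_0,v_0,u_1,\dots,v_{p-1},u_p$ satisfying (Sa)--(Sc). By the minimal choice of the starting vertex, no intermediate $u_k$ ($1\le k\le p-1$) lies on $C$ --- else a strictly shorter chain from the corresponding $y_j\in C$ would exist --- and $u_p\notin\{y_1,\dots,y_s\}$ since $u_p$ has free space in $W^1$ while every $y_j$ is full. I then apply to $W^1$ the following sequence of moves: (i) the \emph{reverse detour}, performing for $j=1,\dots,p$ the move of $v_{p-j}$ from $u_{p-j}$ to $u_{p-j+1}$, which pushes one atom from $y_1$ along the detour to $u_p$; (ii) the \emph{cyclic shift} around $C$ in the order $x_2,x_3,\dots,x_s,x_1$, each $x_i$ moving from $y_i$ to $y_{i-1}$; (iii) the \emph{undo detour}, performing for $j=1,\dots,p$ the move of $v_{j-1}$ from $u_j$ back to $u_{j-1}$. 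A direct inspection of column sums after each move shows each single move is legal: in (i) and (iii) each target column has one free slot just created by the preceding move (and the first move of (i) targets $u_p$, which has free space in $W^1$ to begin with); in (ii), $y_1$ has a free slot from the end of (i), then each subsequent $y_{i-1}$ is freed by the preceding cycle move, and finally $y_s$ is freed by $x_s$'s move.

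The crucial algebraic point is that the entry-level changes produced by (i) and (iii) are exact opposites, hence they cancel, and the net change from $W^1$ to $\hat W^1$ reduces to the cyclic shift (ii). That shift decreases $|D_{x_iy_i}|$ and $|D_{x_{i+1}y_i}|$ each by $1$ for every $i$, giving $\delta(\hat W^1,W^2)=\delta(W^1,W^2)-2s\le \delta(W^1,W^2)-4$, which contradicts the minimality of $(W^1,W^2)$ and completes the argument. The main technical obstacle I anticipate is the collision $v_0=x_1$ when $W^1_{x_1y_1}=1$, where step (i) exhausts the single atom that step (ii) would need at $(x_1,y_1)$; this is repaired by replacing $x_1$'s cycle move with the composite move ``$x_1:u_1\to y_s$'' (legal once $y_s$ has been freed by $x_s$) and dropping the last undo step, which still yields the cyclic shift as the net effect on $D$.
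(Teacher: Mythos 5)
Your proof is correct, but it follows a genuinely different route from the paper's. The paper also reduces connectivity of $\mc L$ to showing that a minimal pair $(W^1,W^2)$ must be equal, but it then argues by a double induction on the length $t$ of a shortest chain (from Lemma \ref{lemma equilibrium 2}) attached to a given column $y$: one distribution move is applied to $W^1$ \emph{and, in parallel, a matching move to} $W^2$, the new pair is again minimal, the chain length drops, and Lemma \ref{lemma minimal1} handles the base case $t=0$, giving $W^1_{\cdot y}=W^2_{\cdot y}$ column by column. You instead keep $W^2$ fixed, observe that $D=W^1-W^2$ has zero row sums (full allocation) and, via Lemma \ref{lemma minimal1} and the symmetry of minimality, zero column sums, extract a simple alternating cycle, and then build one explicit path from $W^1$ (detour to create a free slot, cyclic shift along the cycle, undo the detour) whose net effect decreases $\delta$ by $2s\geq 4$, contradicting minimality outright. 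The paper's version is lighter on bookkeeping (one move at a time, no cancellation argument); yours is more constructive and exposes the combinatorial reason the pair cannot differ (the difference is a union of cycles, each of which can be unwound), at the price of careful entry-level legality checks -- which you essentially carry out, including the only real collision $v_0=x_1$ with $W^1_{x_1y_1}=1$, and your fix does yield the pure cyclic shift as net effect. Two small touch-ups: the undo move to drop is the one reversing the \emph{last} detour move, i.e.\ the $j=1$ step ``$v_0:u_1\to u_0$'' in your enumeration of (iii), not the $j=p$ step; and it is worth stating explicitly that each cyclic-shift move respects the graph constraint because $D_{x_iy_{i-1}}<0$ forces $W^2_{x_iy_{i-1}}>0$, hence $(x_i,y_{i-1})\in\mc E$ by property (P1).
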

\begin{proof}
Let $(W^1, W^2)$ be any minimal pair. We will prove that $W^1$ and $W^2$ are necessarily identical. Consider any resource $y$. It follows from Lemma \ref{lemma equilibrium 2} that we can find a sequence $y=y_0,\,x_0,\,y_1\cdots ,\,y_{t-1},\,x_{t-1},\,y_t$ satisfying the same (Sa), (Sb), and (Sc) with respect to the state allocation $W^1$. Among all the possible sequences, choose one with $t$ minimal for given $y$. We will prove by induction on $t$ that $W^1_{xy}=W^2_{xy}$ for all $x\in\mc X$.

If $t=0$, it means that $W^1_{y}<\beta_{y}$. It then follows from Lemma \ref{lemma minimal1} that $W^1_{xy}=W^2_{xy}$ for all $x\in\mc X$. Suppose now that the claim has been proven for all minimal pairs $(W^1,W^2)$ and any $y\in\mc X$ for which $t< \bar t$ (w.r. to $W^1$) and assume that $y=y_0,\,x_0,\,y_1\cdots ,\,y_{\bar t-1},\,x_{\bar t-1},\,y_{\bar t}$ satisfyies the properties (Sa), (Sb), and (Sc) with respect to $W^1$. Notice that the unit $x_{\bar t-1}$ can move a data atom from resource $y_{\bar t-1}$ into resource $y_{\bar t}$ under the state allocation $W^1$ and obtain $W^{1'}=W^1-e^{x_{\bar t-1}y_{\bar t-1}}+e^{x_{\bar t-1}y_{\bar t}}$. Consider now $W^2$ and notice that Lemma \ref{lemma minimal1} yields $W^2_{x_{\bar t-1}y_{\bar t}}=W^1_{x_{\bar t-1}y_{\bar t}}<\beta_{y_{\bar t}}$. Define
$$W^{2'}=\left\{\begin{array}{ll}W^2\;&{\rm if}\, W^2_{x_{\bar t-1}y_{\bar t -1}}=0\\
W^2-e^{x_{\bar t-1}y_{\bar t-1}}+e^{x_{\bar t-1}y_{\bar t}}\;&{\rm if}\, W^2_{x_{\bar t-1}y_{\bar t -1}}>0\end{array}\right.$$
Clearly, $\delta(W^{1'},W^{2'})\leq \delta(W^{1},W^{2})$ and this implies that also $W^{1'},W^{2'}$ is a minimal pair. Notice that $y=y_0,\,x_0,\,y_1\cdots ,\,y_{\bar t-1}$ satisfies (Sa), (Sb), and (Sc) with respect to $W^{1'}$. Therefore, by the induction hypotheses, it follows that $W^{1'}_{xy}=W^{2'}_{xy}$ for all $x\in\mc X$. Since $W^1_{xy}=W^{1'}_{xy}$ and $W^2_{xy}=W^{2'}_{xy}$, result follows immediately.
\qed
\end{proof}

Propositions \ref{prop time rev} and \ref{prop connected} immediately yield the following final result.
\begin{corollary}\label{cor main 2} Suppose that $(\mc G ,\alpha, \beta)$  satisfies (\ref{cond W ergodic}). Then $(\xi(t), W(t))$, restricted to $\{0,1\}^{\mc X}\times\mc W$, is an ergodic time reversible Markov process whose unique invariant probability measure is given by
$$\mu_{\gamma}(\xi, W)=Z_\gamma^{-1}\left[\prod\limits_{x:\,\xi_x=1}\nu_x^{\rm on}\prod\limits_{x:\,\xi_x=0}\nu_x^{\rm off} \right]{\alpha\choose W}e^{\gamma \Psi(W)}$$
where $Z_{\gamma}$ is the normalizing constant.
\end{corollary}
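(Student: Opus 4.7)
The plan is to combine Propositions \ref{prop time rev} and \ref{prop connected} with the independent Bernoulli dynamics of $\xi(t)$ to conclude ergodicity, and then to algebraically identify the reversible measure with $\mu_\gamma$.

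First, I would establish irreducibility of $(\xi(t),W(t))$ on $\{0,1\}^{\mc X}\times \mc W$. Given any two states $(\xi, W)$ and $(\xi', W')$, I build a connecting path of positive-rate transitions as follows. Using the positive rates $\nu_x^{\rm on}$, flip the off components of $\xi$ one at a time to reach the fully-on state $\1$ while keeping $W$ fixed (transitions of type (\ref{transition1})). In the fully-on configuration, Proposition \ref{prop connected} provides a path in $\mc L$ from $W$ to $W'$; each edge of $\mc L$ corresponds to an allocation or distribution move of type (\ref{transition2}) with strictly positive rate, since $\nu_x^{\rm act}>0$, the probability $P^{\rm dis}_x(W)$ is positive whenever a distribution is feasible, and the Gibbs factor $p^x_{y}$ is positive on its support. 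Finally, use $\nu_x^{\rm off}$ and $\nu_x^{\rm on}$ to drive $\1$ to $\xi'$. Since the state space is finite, irreducibility yields ergodicity and the existence of a unique invariant probability measure.

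Next, Proposition \ref{prop time rev} gives the detailed balance identity (\ref{time reversible}) with the explicit measure $\rho_{(\xi,W)}$. Hence the normalized $\rho$ is automatically reversible, and uniqueness of the invariant measure implies that it coincides with the invariant law of the process.

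To finish, I would identify $\rho$ with $\mu_\gamma$. Since $\sum_y W_{xy}=\alpha_x$ for every $W\in\mc W$, the multinomial identity gives
\[
\frac{\prod_x \alpha_x!}{\prod_{x,y}W_{xy}!}
=\prod_x \frac{\alpha_x!}{\prod_y W_{xy}!}
={\alpha\choose W},
\]
so $\rho_{(\xi,W)}$ matches the expression for $\mu_\gamma(\xi,W)$ up to the common normalization $Z_\gamma$. The main obstacle is negligible: all the substantial work has already been done in Propositions \ref{prop time rev} and \ref{prop connected}, and the only mildly delicate point is verifying that every edge of $\mc L$ translates into a positive-rate transition of the augmented chain at $\xi=\1$, which is immediate from (\ref{transition2}).
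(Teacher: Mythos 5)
Your proposal is correct and follows essentially the same route as the paper, which simply observes that Propositions \ref{prop time rev} and \ref{prop connected} immediately yield the corollary: connectivity of $\mc L$ at $\xi=\1$ (by the very definition of $\mc L_p$ via positive rates) combined with the on/off flips gives irreducibility on the finite state space, and the reversible measure $\rho$ of Proposition \ref{prop time rev} is identified with $\mu_\gamma$ through the multinomial identity, exactly as you do. Your write-up just makes explicit the routine details the paper leaves implicit.
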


\medskip
{\bf Remark:} It follows from previous result that the process $W(t)$ converges in law to the probability distribution
$$\tilde\mu_{\gamma}(W):=\tilde Z_\gamma^{-1}{\alpha\choose W}e^{\gamma \Psi(W)}$$
Notice that when $\gamma\to +\infty$, the probability $\tilde\mu_{\gamma}$ converges to a probability concentrated on the set $\argmax_{W\in\mc W} \Psi(W)$ of state allocations maximizing the potential, thus on particular  Nash equilibria. Thus, if $\gamma$ is small, the distribution of the process $W(t)$ for $t$ sufficiently large will be close to a Nash equilibrium.

\medskip
{\bf Remark:} Condition (\ref{cond W ergodic}) is necessary for ergodicity. Notice indeed that in the case when $\mc G$ is complete and $\alpha_x=\beta_x=a$ for all $x\in\mc X$, under every allocation $W$ such that $W^y=a$ for every $y$, all resources will be saturated and, consequently, no distribution move will be allowed in $W$. Such allocations $W$ are thus all sinks in the graph $\mc L$ that is therefore not connected.

\section{Simulation}\label{sec simulations}

In this section, we present a number of numerical simulations that validate our theoretical results and show the behavior of the proposed algorithm. We analyze the structure of the asymptotic Nash configuration reached and the effect of the variations of the parameters in the utility function. The algorithm is tested for its convergence properties and evaluated through some performance indices measuring the computational and topological complexity of the solution found.

Below we report the basic assumptions of our simulations.

\begin{itemize}
\item Most of our simulations will be for $n=10$ or $n=50$ units while larger populations will be considered to show the scalability of the algorithm. 

\item We consider two possible underlying networks, a complete graph and a regular graph with degree $10$. 

\item We consider the case of homogeneous populations where all units have the same $\alpha_x$, $\beta_x$ and $\lambda_x$, as well heterogeneous cases with two subpopulations $\mc X_1$ and $\mc X_2$ sharing different parameters.

\item In the utility function we set the congestion parameter to be $k_c=1$ while we consider different values for the aggregation parameter $k_a$.

\item Most of our simulations will assume that units are always on. An example where units may be in off mode is however presented at the end.

\item In the implementation of the algorithm, time is supposed to be discrete, assuming that at each instant of time a unit $x$ wakes up with a probability proportional to $\alpha_x$. The allocation and distribution moves are chosen following the probabilities:
$$P_{\rm all}(W(t),x)=\left\{\begin{array}{ll} 1\;&{\rm if}\hspace{.2cm} W(t)^x<\alpha_x\\ 0\; &{\rm otherwise}\end{array}\right.,\quad P_{\rm dis}(W(t),x)=1-P_{\rm all}(W(t),x)$$

\item The parameter $\gamma$ in the Gibbs distribution is chosen to be time-varying with the law:
$$\gamma(t)=\gamma(t-1)+\frac{1}{\lambda_{max}*100}$$
where $\lambda_{max}$ is the maximum reliability of the resources. 

\item The typical time horizon we will consider in the algorithm will be $T=10\sum_x\alpha_x$: in this way every unit $x$ will activate, on the average, a number of times equal to $10$ times the number of data atoms it needs to allocate. As we will see, this time range is in general sufficient to reach, or at least get close, to a Nash equilibrium. Greater time ranges will be considered in some cases.

\end{itemize}

For all examples, the performance of the algorithm will be analyzed considering the following parameters computed, in a Montecarlo style, by averaging over 10 running of the algorithm.
\begin{itemize}
\item {\bf Distance from optimum:} $\psi=\frac{\Psi(W^{T)}}{\Psi_{opt}}$ where $\Psi_{opt}$ is the maximum of the potential. This allows to measure how the final allocation state $W(T)$ is far from a Nash configuration maximizing the potential $\Psi$. We will use the exact value of  $\Psi_{opt}$ in the homogeneous cases treated in Examples \ref{ex: homogeneous1} and \ref{ex: homogeneous2} for which the optimum Nash is explicitly known. For the heterogeneous cases we will instead approximate it using standard Matlab numerical algorithm.

In some cases we will also present a plot of the function $\psi(t)=\frac{\Psi(W(t))}{\Psi_{opt}}$ in this case for a single running of the algorithm.

\item {\bf Allocation complexity:} the parameter 
$$\nu_{moves}=\frac{1}{n}\sum\limits_{x\in\mc X}\frac{m_x}{\alpha_x}$$
where $m_x$ is the total number of moves (allocation and distribution) made by unit $x$. $\nu_{moves}$ measures the number of allocation and distribution moves per piece of data. Since moving data from a resource to another can be expensive, it is an important parameter to control.

\item {\bf Average satisfaction:} $$\bar\Lambda:=\frac{1}{n}\sum_{x\in\mc X}\sum_{y\in N_x}\frac{W^{T}_{xy}}{\alpha_x}\lambda_y$$
This is meaningful when there are resources with different values for $\lambda_y$. 
If we interpret $\lambda_y$ as a measure of the quality of a resource $y$ (reflecting possible different features as responsiveness, security, etc), $\bar\Lambda$ 
measures the efficiency of the algorithm to choose the best resources where to allocate.
\item{\bf Topological complexity:} We consider the subgraph constituted by the edges $(x,y)$ such that $W^{T}_{xy}>0$ and we compute the average out-degree and average in-degree for the two subpopulations in the heterogeneous case
$$d^+:=\frac{1}{n}\sum_{x\in\mc X}\sum_{y\in\mc X}\1_{\{W^T_{xy}>0\}},\; d^-_i:=\frac{2}{n}\sum_{x\in\mc X}\sum_{y\in\mc X_i}\1_{\{W^T_{xy}>0\}},\; i=1,2$$
$d^+$ measures the way units have split their data among the available neighbors, while $d^-_i$ the average amount of units using a resource of a certain type.
\item {\bf Average congestion:} 
For the heterogeneous case, the following index measure the congestion of the two classes of resources:
 $$\bar C^i:=\frac{1}{n\beta }\sum_{y\in\mc X_i}W^{T}_y$$
\end{itemize}

\subsection{Homogeneous case}
In this section we consider the case when all units have equal parameters: $\alpha_x=\alpha$, $\beta_x=\beta$ and $\lambda_x=\lambda$ for every $x\in\mc X$. In this case Nash equilibria were studied in Examples \ref{ex: homogeneous1} and \ref{ex: homogeneous2}. In particular, we know that, for any value of the aggregation parameter $k_a$, the maximum of the potential is given by the Nash equilibria where each unit uses just one resource: $W^\sigma_{xy}=\alpha\1_{\sigma(x)=y}$ where $\sigma :\mc X\to\mc X$ is any permutation without fixed points.

\begin{example}\label{ex: completo}

Assume that $n=10$ and that the underlying graph is complete. Parameters are set to the values: $\alpha=27$, $\beta=30$, and $\lambda=3$.  We consider the following values for the aggregation term $k_a$: $0, 0.003, 0.1$. Notice that only case when $k_a>1/\beta$ is for $k_a=0.1$: in this case we expect to find final state allocation where the aggregation feature prevails according to the considerations in Example \ref{ex: homogeneous2}.

For all simulations we compute the indices $\psi$, $\nu_{moves}$ and $d^+$. Here are the values we have obtained:
\begin{center}
\begin{table}[h]
\centering
\scriptsize
\begin{tabular}{|l|c|c|c|c|c|}
\hline
&$k_a=0$  & $k_a=0.003$ & $k_a=0.1$\\
\hline
$\nu_{moves}$&  8.7863  & 6.2907 &  3.5359\\[1pt]
$d^+$ &   7.4100   &    1& 1 \\[1pt]
$\psi$ &  0.9994   &  1  & 1 \\
\hline
\end{tabular}
\caption{Values for $k_a=0$, $k_a=0.003$, $k_a=0.1$ in the complete case}
\end{table}
\end{center}

\vspace{-12pt}

Notice how the number of moves per atom is decreasing with $k_a$: this feature is due to the fact that the stronger is the aggregation term, the more the initial allocation will tend to immediately create a solution with aggregation features already close to a Nash equilibrium. 

Notice how for $k_a=0$ the optimal is in general not reached, while it is reached in the case $k_a=0.003$ and  $k_a=0.1$. In order to better understand the convergence behavior, below we present, for a single running of the algorithm, the plot of $\psi(t)$:

\begin{figure}[htb]
\centering
\begin{minipage}{.28\textwidth}
\centering
\includegraphics[scale=.18]{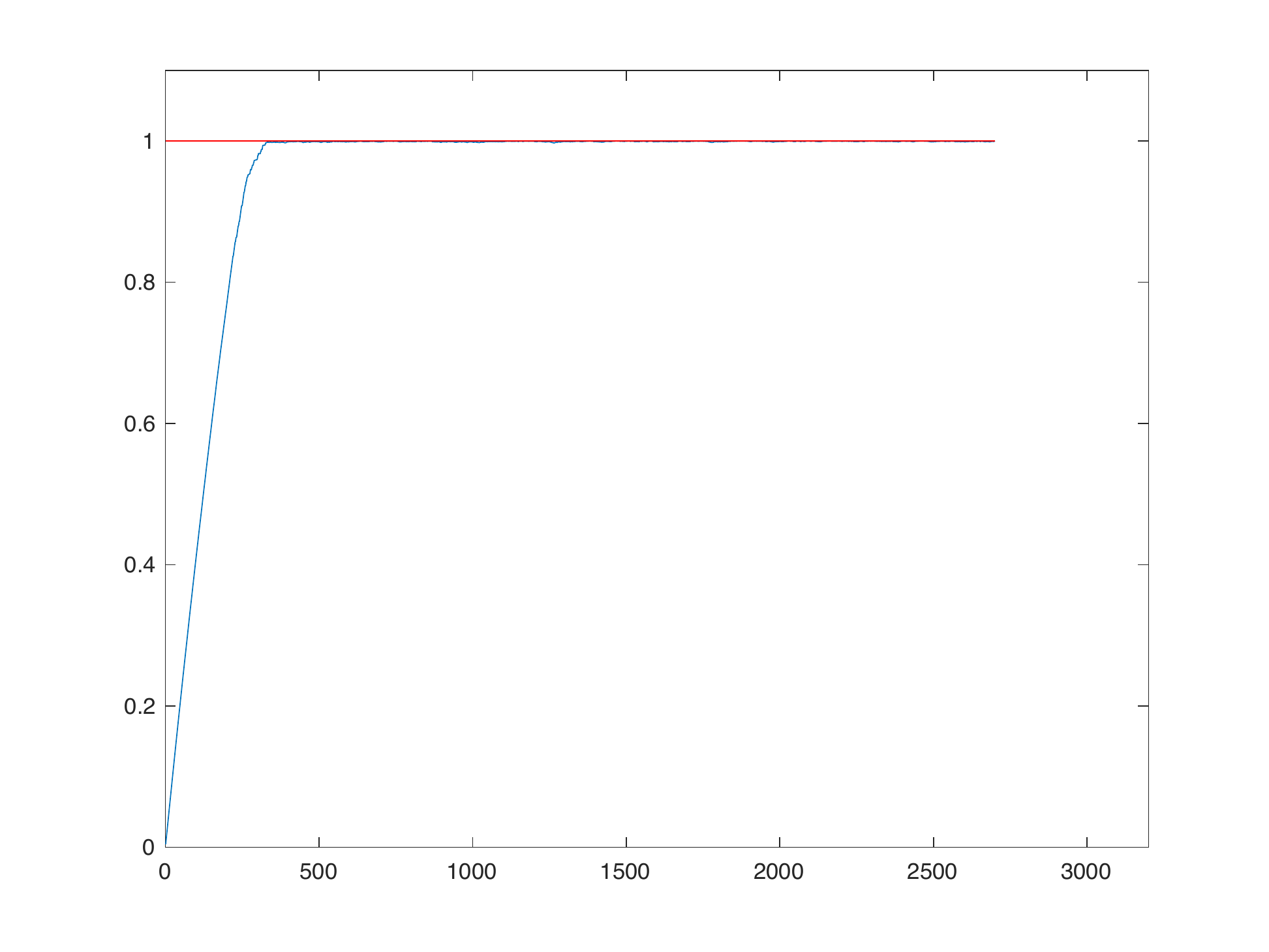}
\end{minipage}%
\begin{minipage}{.28\textwidth}
\centering
\includegraphics[scale=.18]{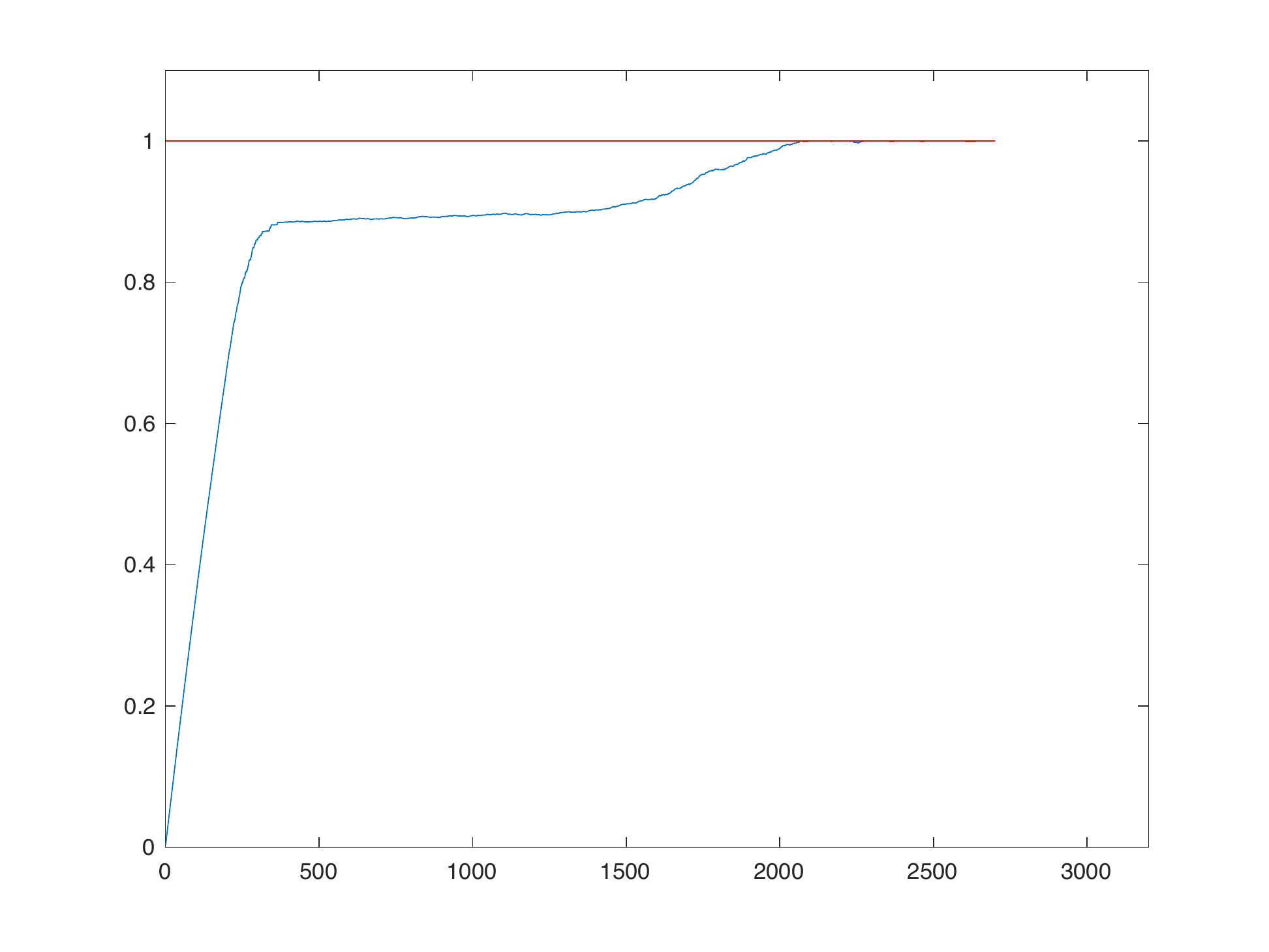}
\end{minipage}%
\begin{minipage}{.28\textwidth}
\centering
\includegraphics[scale=.18]{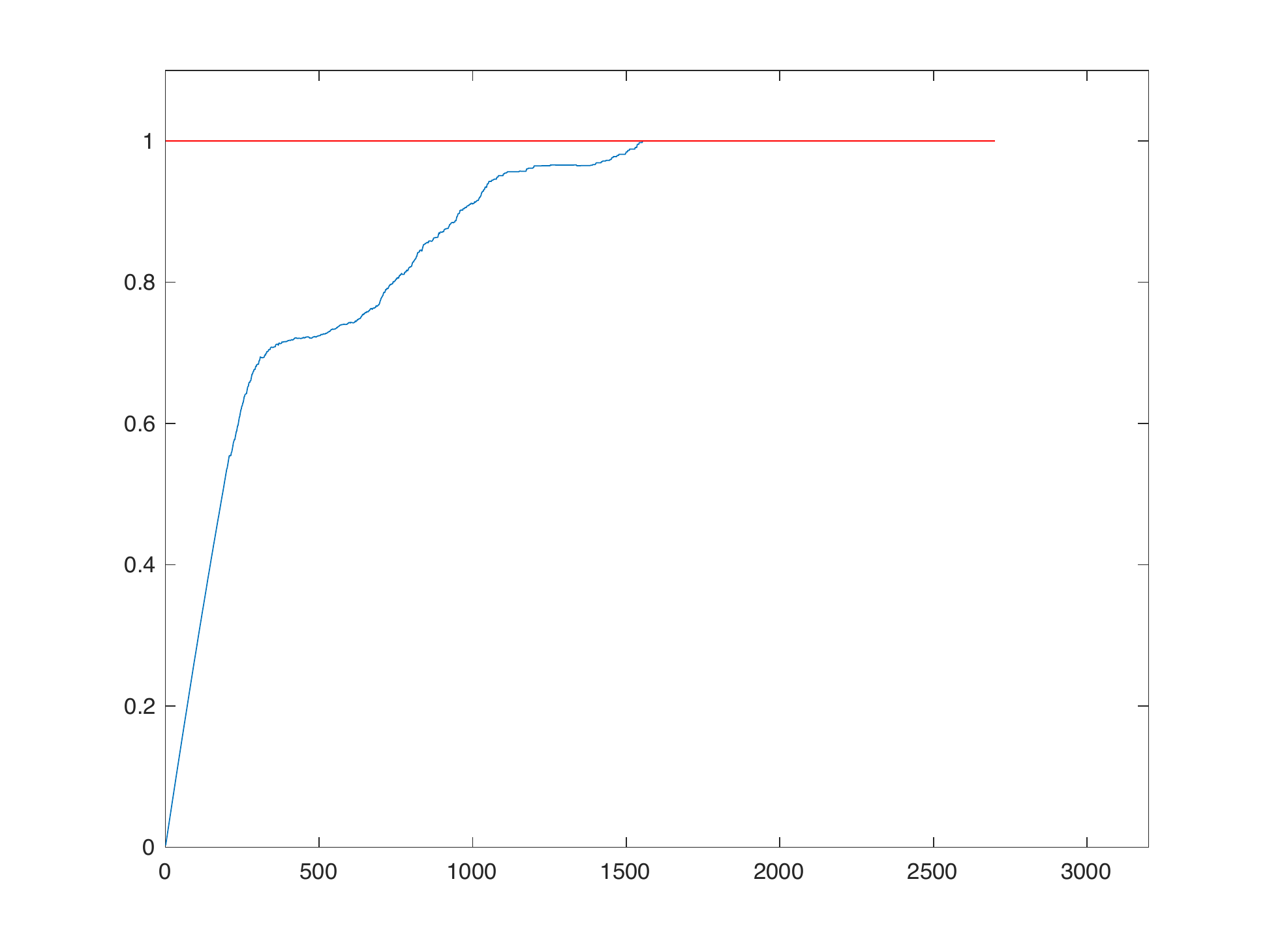}
\end{minipage}
\caption{Plot of $\psi(t)$ for $k_a=0$, $k_a=0.003$, $k_a=0.1$}
\end{figure}


Notice how, in the case $k_a=0$, the convergence is indeed fast in the initial part but then it slows down and the dynamics remains for a long time fluctuating in matrices close to a Nash. A typical running of the algorithm yields a final configuration $W(T)$ of type
$$W(T)=\left(\begin{matrix}
 0    & 4   &  2   &  2  &   1   &  1   &  5   &  8   &  2  &   2 \\
    4   &  0  &   8  &   3    & 3 &    2     &4     &1   &  1   &  1\\
    3   &  2 &    0 &    2  &   4   &  1  &   1   &  5     &4    & 5\\
    1  &   5  &   5  &   0  &   1   &  0  &   0  &  10    & 3   &  2\\
    6  &   0  &   3   &  1  &   0   &  5  &   3    & 2   &  4  &   3\\
    2  &   1  &   2   &  1 &    2   &  0   &  6&     0    & 7 &    6\\
    3   &  4 &    0   &  5  &   7    & 3   &  0  &   1  &   1 &    3\\
    2   &  5 &    2   &  4 &    4  &   1   &  3    & 0&     2  &   4\\
    3 &    5 &    4   &  2  &   2    & 4   &  1   &  3     &0   &  3\\
    5&     0 &    3   &  6&     3   &  5   &  3     &0   &  2  &   0 \\
    
\end{matrix}\right)$$
\normalsize
with congestion vector 
$C=\1^*W(T)=(\begin{matrix}29  &  26   & 29  &  26  &  27  &  22    &26 &   30 &   26  &  29 \end{matrix})$
In this case, in order to achieve the optimum with all resources equally congested, the algorithm must run for much longer time, order of $200*\sum_x\alpha_x$. This is a peculiarity of the case without aggregation term and such phenomena never shows up otherwise. A faster convergence however can be obtained in this case by a different tuning of the noise function. Using 
$$\gamma(t)=\gamma(t-1)+\frac{1}{\lambda_{max}*10}$$ we typically reach an optimum at $T=30*\sum_x\alpha_x$.

In the case when $k_a=0.003$ or $k_a=0.1$ convergence is slower in the initial part, however, the typical state allocation $W(T)$ constructed by the algorithm will instead be a matrix of type $W^{\sigma}$, namely an optimal Nash equilibrium.

\end{example}

The aim of the following example is to prove that the different non-optimal Nash equilibria presented in Example \ref{ex: homogeneous2} can attract the dynamics of the algorithm.

\begin{example}

Assume that $n=10$ and that the underlying graph is complete. Parameters are set to the values: $\alpha=27$, $\beta=60$  (so that $\beta>2\alpha$), $\lambda=3$, and $k_a=0.1>1/\beta$. The time horizon is fixed to $T=20*\sum_x\alpha_x$. Here are the values of $\psi$, $\nu_{moves}$ and $d^+$ obtained in this case:
\begin{center}
\begin{table}[h]
\centering
\scriptsize
\begin{tabular}{|l|c|c|c|}
\hline
$\nu_{moves}$&3.2781 \\[1pt]
$d^+=d^-$ &   1      \\[1pt]
$\psi$ &   0.9805    \\
\hline
\end{tabular}
\caption{Values for $\beta>2\alpha$}
\end{table}
\end{center}

\vspace{-12pt}
Notice how the optimal Nash (e.g. any aggregated matrix $W^{\sigma}$) is not reached in general. If we analyze single runnings of the algorithm, we can verify that the algorithm reach different possible Nash equilibria: besides the optimal $W^{\sigma}$, we observe also final configurations where one or more resources are used by two units. Here is a sample of final configurations observed

  $$W=\left(\begin{matrix}   0 &  0 &  0 &  0 &  0 & 27 &  0 &  0 &  0 &  0\\
      0 &  0 &  0 & 27 &  0 &  0 &  0 &  0 &  0 &  0\\
      0 &  0 &  0 &  0 &  0 &  0 &  0 &  0 &  0 & 27\\
     27 &  0 &  0 &  0 &  0 &  0 &  0 &  0 &  0 &  0\\
      0 &  0 &  0 &  0 &  0 &  0 &  0 & 27 &  0 &  0\\
      0 &  0 &  0 &  0 &  0 &  0 &  0 & 27 &  0 &  0\\
      0 &  0 &  0 &  0 & 27 &  0 &  0 &  0 &  0 &  0\\
      0 &  0 &  0 &  0 &  0 &  0 &  0 &  0 & 27 &  0\\
      0 &  0 & 27 &  0 &  0 &  0 &  0 &  0 &  0 &  0\\
      0 &  0 &  0 &  0 &  0 &  0 & 27 &  0 &  0 &  0\\
\end{matrix}\right)$$

\normalsize

Notice how such Nash equilibria where indeed correctly forecasted in Example \ref{ex: homogeneous2}.

\end{example}
We conclude the homogeneous case with an example showing the good scalability properties of the algorithm.
\begin{example}

We consider the case when $n=50,100,1000$ and that the underlying graph is a random regular graph with degree 10 (the use of the complete graph for large number of units becomes heavy on the computational side and, in any case, is also of little practical interest). Parameters are set to the values: $\alpha=27$, $\beta=30$, $\lambda=3$, $k_a=0.03$.  
Here are the values obtained for $\psi$, $\nu_{moves}$ and $d^+$:

\begin{center}
\begin{table}[h]
\centering
\scriptsize
\begin{tabular}{|l|c|c|c|}
\hline
& $n=50$ & $n=100$ & $n=1000$ \\
\hline
$\nu_{moves}$& 2.6876 &  2.7064 & 1.4592\\[1pt]
$d^+=d^-$ &   1  &1.0870   &  1.1000\\[1pt]
$\psi$ &   1 &    0.9978  & 0.9973\\
\hline
\end{tabular}
\caption{Scalability of the Algorithm}
\end{table}
\end{center}   
\end{example}

\subsection{Non-homogeneus case}
In this section we consider the case when all units have the same $\alpha_x=\alpha$ and $\beta_x=\beta$ but, for what the reliability is concerned, the population is split into two parts $\mc X_1$ and $\mc X_2$ and 
$\lambda_x=\lambda^i$ if $x\in\mc X_i$ with $\lambda^1\neq \lambda^2$.
While we do not have in this case any explicit computation of the optimal Nash equilibrium,  it can easily been shown that the aggregate solution $W^{\sigma}$ is a Nash Equilibrium (possibly not a maximum of the potential) if and only if  $k_a\geq \frac{\lambda^2-\lambda^1-1/\beta}{\alpha-1}$.

\begin{example}\label{ex: etero on}
Assume that $n=50$, $|\mc X_1|=|\mc X_2|=25$, and that the underlying graph is either complete or random regular of degree $10$. Parameters are set to the values: $\alpha=45$, $\beta=50$, $\lambda^1=0.5$ $\lambda^2=0.8$.  We consider different values for the aggregation term $k_a$.
For all simulations we compute the parameters $\psi$, $\nu_{moves}$, $d^+$, $d^-$, $\bar\Lambda$, and $\bar C^i$ averaged over 10 running of the algorithm. Here are the values we have obtained in the two cases of a complete graph and of a random regular graph of degree $10$.

\begin{table}[htbp]
        \centering
        \begin{minipage}[c]{.45\textwidth}\tiny
         \centering
\begin{tabular}{|l|c|c|c|c|c|}
\hline
 &$k_a=0.001$  & $k_a=0.005$ & $k_a=0.1$ \\
\hline
$\nu_{moves}$&  4.7015  & 4.8150 &  3.2110 \\ [1pt]
$\bar \Lambda$ &   0.6667 &   0.6593 & 0.6502 \\ [1pt]
$\bar C_1$ &    0.8000     &  0.8439  &0.9000  \\ [1pt]
$\bar C_2$ &   1.0000   &     0.9561 &0.9000 \\ [1pt]
$d^+$ &   21.1180  &    2.1680& 1 \\ [1pt]
$d^-_1$ &  13.5440   &  1.0920 & 1\\ [1pt]
$d^-_2$ &  28.6920  &   3.2440&1\\ [1pt]
$\psi$ &  0.9998   &  1  & 1 \\
\hline
\end{tabular}
          \caption{Complete case}
        \end{minipage}%
        \hspace{10mm}%
        \begin{minipage}[c]{.45\textwidth}\tiny
          \centering
\begin{tabular}{|l|c|c|c|}
\hline
 &$k_a=0.001$  & $k_a=0.005$ & $k_a=0.1$\\
\hline
$\nu_{moves}$& 4.1580   &  4.5907 & 2.5242 \\[1pt]
$\bar \Lambda$ &  0.6667  &  0.6595  & 0.6511 \\[1pt]
$\bar C_1$ &   0.8000      & 0.8430  & 0.8933  \\[1pt]
$\bar C_2$ & 1.0000     &  0.9570    &  0.9067\\[1pt]
$d^+$ &  6.3160   &   1.7000  & 1.0680 \\[1pt]
$d^-_1$ &   2.8720  &  1.0480 & 1.0240\\[1pt]
$d^-_2$ & 9.7600   &  2.3520  &1.1120\\[1pt]
$\psi$ &   0.9998   &  0.9999   & 0.9986\\
\hline
\end{tabular}
          \caption{Regular case}
        \end{minipage}

      \end{table}
\normalsize

These results, on one hand, show how the speed and computational performance of the algorithm in these heterogeneous cases are similar to the homogeneous case: at the horizon time $T$ the system is always very close to (or is exactly at) the optimal solution and the number of moves per atom is similar. On the other hand, other interesting features emerge from an analysis of the other parameters. The congestion parameters show how, for very small aggregation term $k_a=0.001$, the most reliable resources in $\mc X_2$ are fully used, while for $k_a=0.1$ resources in $\mc X_1$ and $\mc X_2$ are almost equally congested. Indeed in this last case, at least when the graph is complete, the aggregated solutions $W^{\sigma}$ are optimal and they are indeed reached by the algorithm. Notice that the average degrees are already very small at the intermediate level $k_a=0.005$: in this case the solution found by the algorithm is close to one where each unit uses just one resource. Clearly, the tuning of the parameters (in particular, the aggregation term $k_a$) will be important in real world applications: it will allow, depending on the specific needs of the working case, to trade-off reliability of the mostly used resources versus aggregation.
Finally, notice how the fact of working with a preassigned topology does not change very much the behavior of the algorithm.

\end{example}

We conclude this section with an example where we also incorporate the feature that resources may be on or off. In our discrete time model we assume that every time each unit $x$ is on with probability $p_x$ (independently from the others). 
Specifically, we assume that $\mc X$ is split into two subfamilies $\mc X_1$ and $\mc X_2$ such that $\lambda_x=\lambda^i$ and $p_x=p^i$ if $x\in\mc X_i$. Moreover, we assume that $\lambda^i=m p^i$ for $i=1,2$ and some proportionality constant $m$. This makes sense: the reliability is assumed to be proportional to the probability of finding the resource in on state.

\begin{example}

Assume that $n=50$, $|\mc X_1|=|\mc X_2|=25$, and that the underlying graph is complete. Parameters are set to the values: $\alpha=45$, $\beta=50$, $p^1=0.5$, $p^2=0.8$, $m=5$, $k_a=0,1$.  Here, we fix the time horizon $T=50*\sum_x\alpha_x$. 

We compute the parameters $\psi$, $\nu_{moves}$, $d^+$, $d^-$, $\bar\Lambda$, and $\bar C^i$ averaged over 10 running of the algorithm.

\vspace{-12pt}


\begin{center}
\begin{table}[h]
\centering
\scriptsize
\begin{tabular}{|l|c|c|c|c|c|}
\hline
$\nu_{moves}$&  3.1080    \\[1pt]
$\bar \Lambda$ &   0.6620\\[1pt]
$\bar C_1$ &  0.8278      \\[1pt]
$\bar C_2$ &0.9722\\[1pt]
$d^+$ &   1.4860  \\[1pt]
$d^-_1$ & 0.9320   \\[1pt]
$d^-_2$ & 2.0400   \\[1pt]
$\psi$ &     0.9484  \\
\hline
\end{tabular}
\caption{Values considering on and off resources}
\end{table}
\end{center}
\normalsize
\vspace{-12pt}
As shown in the table, even if the time horizon is higher than in previous examples, the average number of moves per each atom is of similar order due to the fact that when units are in off mode they can neither allocate or accept allocation. For the rest, the other parameters show that the algorithm, though more slowly, has a behavior similar to the case when units are always on.

\end{example}

\section{Conclusions}
In this paper, we have analyzed an allocation algorithm that allows a network of units (computers)  to use the memory space available in their neighbors to store a back up of their data. The interest for such an algorithm comes from the goal of creating a peer-to-peer decentralized cloud storage model to be thought as an alternative to classical centralized cloud storage services.

The proposed algorithm is based on evolutionary game theory. The main theoretical contribution has been to prove convergence and to give en efficient characterization of the asymptotic in terms of an explicit invariant probability distribution picked on optimal Nash equilibria. We have also carried on a number of simulations showing the good scalability properties of the algorithm, its reduced complexity and illustrating the nice features of 
the final allocation state.

A number of important issues remain open and are left for future research. Among these: (i) find explicit bounds on the convergence time of the algorithm; (ii) study the structure of the Nash equilibria for general topologies;
(iii) extend to the case when multiple back up copies are needed to be stored with the further security constraint of using different resources for the different copies.

\section*{Acknowledgment}
We acknowledge that this work has been done while Barbara Franci was a PhD Student sponsored by a TIM/Telecom Italia grant.

\end{document}